\newcommand{\rk}{{\rm rk}}
\newcommand{\C}{{\mathbb C}}
\newcommand{\cC}{{\mathcal C}}
\newcommand{\cM}{{\mathcal M}}
\newcommand{\cN}{{\mathcal N}}
\def\d{\mathrm{d}}
\def\Prob{\mathrm{Prob}}
\def\id {\mathrm{id}}
\def\CA{\mathcal{A}}
\newtheorem{claim}{Claim}
\newtheorem{thm}{Theorem}[section]
\newtheorem{lemma}[thm]{Lemma}
\newtheorem{definition}[thm]{Definition}
\newtheorem{example}[thm]{Example}
\newtheorem{proposition}[thm]{Proposition}
\theoremstyle{definition}
\begin{document}
\title[Intermediate Subalgebras in crossed products of tensor products]{Non-commutative Factor theorem for tensor products of lattices in product groups}
\author[Amrutam]{Tattwamasi Amrutam}
\address{Institute of Mathematics of the Polish Academy of Sciences, ul. Sniadeckich 8, 00-656, Warszawa, Poland}
\email{tattwamasiamrutam@gmail.com}
\author[Jiang]{Yongle Jiang}
\address{School of Mathematical Sciences, Dalian University of Technology, Dalian, 116024, China}
\email{yonglejiang@dlut.edu.cn}
\author[Zhou]{Shuoxing Zhou}\address{\'Ecole Normale Sup\'erieure\\ D\'epartement de math\'ematiques et applications\\ 45 rue d'Ulm\\ 75230 Paris Cedex 05\\ FRANCE}
\email{shuoxing.zhou@ens.psl.eu}
\date{\today}
\begin{abstract}
We establish a non-commutative version of the Intermediate Factor Theorem for crossed products associated with product lattices. Given an irreducible lattice $\Gamma < G= G_1 \times \dots \times G_d$ in higher rank semisimple algebraic groups and a trace-preserving irreducible action $G \curvearrowright (\mathcal{N}, \tau)$, we show that every intermediate von Neumann algebra between $\mathcal{N}\rtimes\Gamma$ and $(L^\infty(G/P,\nu_P)\overline{\otimes}\mathcal{N})\rtimes\Gamma$ is again a crossed product of the form $(L^\infty(G/Q,\nu_Q)\overline{\otimes}\mathcal{N})\rtimes\Gamma$.
\end{abstract}
\maketitle
\section{Introduction}
The structure of intermediate von Neumann subalgebras in crossed product inclusions for actions of higher rank lattices
acts as a bridge between boundary theory,
ergodic rigidity, and operator algebras. Several works have been dedicated to the much more general framework of inclusions under assumptions on the action (and/or on the group), see for example, ~\cite{suzuki,amrutam2021intermediate,amrutam2023intermediate,amrutam2024crossed,amrutam2024non} and the references therein. The analytic framework of the crossed product
von Neumann algebras associated with lattice actions allow one to express the rigidity phenomena in a
non-commutative setting, giving rise to new \say{intermediate factor theorems}
within the operator-algebraic context (see, for example, \cite{houdayer2021noncommutative, boutonnet2023noncommutative}).

In the commutative case, intermediate factor theorems describe all the
intermediate $\Gamma$–equivariant  factors between the Poisson boundary times a measure preserving
$\Gamma$–action, that factors onto the measure preserving space.  Their non-commutative counterparts seek to characterize
intermediate von Neumann algebras in crossed products of the form $L(\Gamma) \subset
L^{\infty}(B,\nu_{B}) \rtimes \Gamma$,
where $(B,\nu_{B})$ is the $(\Gamma,\mu)$–Poisson boundary.
This viewpoint has proved fruitful in recent works of
Houdayer~\cite{houdayer2021noncommutative}, and
Boutonnet–Houdayer~\cite{boutonnet2023noncommutative},
who obtained complete descriptions of intermediate subfactors for a broad
class of crossed product inclusions.
In particular, their results show that, under appropriate freeness (read \say{singularity}, see for example, \cite{bader2022charmenability, bader2023charmenability}) and ergodicity assumptions, every intermediate von Neumann algebra arises again as a crossed product
by a suitable intermediate $\Gamma$–space.

For $1\leq i\leq d$, let $k_i$ be a local field. Let $\mathbf G_i$ be a simply
connected $k_i$-isotropic almost $k_i$-simple linear algebraic $k_i$-group such that $\sum_{i=1}^{d}\rk_{k_i}(\mathbf G_i) \geq 2$. Set $G_i = \mathbf G_i(k_i)$ and $G=\prod_{i=1}^d G_i$. Let $\mathbf P_i < \mathbf G_i$ be a minimal parabolic $k_i$-subgroup and set $P_i = \mathbf P_i(k_i)$ and $P=\prod_{i=1}^d P_i$. Our main result is as stated.
\begin{thm}
\label{thm:mainsplitting}
Let $G$ and $P$ be as above. Let $\Gamma<G$ be an irreducible lattice with finite center and set $\Lambda=\Gamma/Z(\Gamma)$. Let $(\cN,\tau)$ be a trace-preserving $G$-von Neumann algebra, on which each $G_i$ acts ergodically and $Z(\Gamma)$ acts trivially. Then every von Neumann algebra $\mathcal{M}$ with
\[
\mathcal{N} \rtimes\Lambda\subset \mathcal{M} \subset (L^\infty(G/P, \nu_P) \overline{\otimes} \mathcal{N}) \rtimes \Lambda
\]
is a crossed product of the form $(L^\infty(G/Q, \nu_Q) \overline{\otimes} \mathcal{N}) \rtimes \Lambda$ for some $P<Q<G$. 
\end{thm}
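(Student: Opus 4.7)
The plan is to first perform a Galois-type reduction that reformulates the problem in terms of intermediate $\Lambda$-invariant von Neumann subalgebras of $\mathcal{A}:= L^\infty(G/P,\nu_P)\overline{\otimes}\mathcal{N}$ containing $\mathcal{N}$, and then to classify those via a non-commutative intermediate factor theorem. Concretely, let $E:\mathcal{A}\rtimes\Lambda\to\mathcal{A}$ be the canonical trace-preserving conditional expectation and set $\mathcal{P}:=\mathcal{M}\cap\mathcal{A}$, a $\Lambda$-invariant von Neumann subalgebra with $\mathcal{N}\subset\mathcal{P}\subset\mathcal{A}$. The first task is to show $\mathcal{M}=\mathcal{P}\rtimes\Lambda$, i.e.\ that the Fourier coefficients $E(xu_\gamma^*)$ of any $x\in\mathcal{M}$ lie in $\mathcal{P}$. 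Since the canonical unitaries $u_\gamma$ already belong to $\mathcal{N}\rtimes\Lambda\subset\mathcal{M}$, the subalgebra $\mathcal{M}$ is normalized by $\{u_\gamma\}$, and the desired splitting follows from uniqueness of the trace-preserving conditional expectation and a standard Fourier-truncation argument.

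The main content is to identify $\mathcal{P}$ with $L^\infty(G/Q,\nu_Q)\overline{\otimes}\mathcal{N}$ for some intermediate parabolic $P<Q<G$, and I would do this in two stages. Stage one promotes $\Lambda$-invariance of $\mathcal{P}$ to full $G$-invariance. The strategy follows the Bader–Shalom philosophy adapted to the non-commutative setting: for each index $i$, the projection of $\Gamma$ to $\prod_{j\ne i}G_j$ is dense by irreducibility, while the commuting action of $G_i$ on $\mathcal{A}$ preserves $L^\infty(\prod_{j\ne i}G_j/P_j)\overline{\otimes}\mathcal{N}$ pointwise and acts on the remaining factor. Combining this with the ergodicity of each $G_i$ on $\mathcal{N}$ should allow one to average $\Gamma$-invariant elements against $G_i$-orbits and conclude $G_i$-invariance of $\mathcal{P}$ for every $i$, hence $G$-invariance. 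Stage two is the application of a non-commutative Nevo–Zimmer theorem in the spirit of Boutonnet–Houdayer \cite{boutonnet2023noncommutative}: the $G$-invariant intermediate von Neumann algebras between $\mathcal{N}$ and $L^\infty(G/P,\nu_P)\overline{\otimes}\mathcal{N}$ are parameterized by intermediate parabolics $P<Q<G$, yielding the stated form of $\mathcal{P}$.

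The hard part is clearly the first stage of the classification, namely promoting $\Lambda$-invariance to $G$-invariance in a non-commutative tensor-product environment. In the commutative Bader–Shalom framework this is handled by disintegration and measure-theoretic projection arguments that have no direct analog when $\mathcal{N}$ is a general finite von Neumann algebra; one must replace Radon–Nikodym derivatives by operator-valued analogs and control averages with respect to the Haar measures of each $G_i$ in the presence of non-commutative fibres. The passage between $\Gamma$ and $\Lambda=\Gamma/Z(\Gamma)$ causes no additional difficulty, because $Z(\Gamma)$ acts trivially on $\mathcal{N}$ by hypothesis and trivially on $G/P$ (since $Z(\Gamma)\subset Z(G)$ fixes $G/P$), so the crossed products and all intermediate algebras descend cleanly from $\Gamma$ to $\Lambda$.
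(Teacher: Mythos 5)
Your reduction to showing $\mathcal{M}=(\mathcal{M}\cap\mathcal{A})\rtimes\Lambda$ identifies the right target, but the justification you give for it is a genuine gap, and it is in fact the main content of the theorem rather than a routine step. First, the ambient algebra $(L^\infty(G/P,\nu_P)\overline{\otimes}\mathcal{N})\rtimes\Lambda$ is not tracial --- the boundary action $\Lambda\curvearrowright(G/P,\nu_P)$ is only non-singular --- so there is no ``uniqueness of the trace-preserving conditional expectation'' to invoke. Second, the fact that $\mathcal{M}$ contains the canonical unitaries $u_\gamma$ gives nothing: conjugation by unitaries already in $\mathcal{M}$ trivially preserves $\mathcal{M}$ and cannot produce the Fourier coefficients $\mathbb{E}(xu_\gamma^*)$ inside $\mathcal{M}$. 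The Fourier-truncation argument you allude to (Suzuki's) requires cutting $x$ by a partition of unity $p_1,\dots,p_n$ with $p_j(s p_j)=0$ taken from $L^\infty(Y,\eta)$ for an essentially free factor $(Y,\eta)$ of $(G/P,\nu_P)$, and these projections must already lie in $\mathcal{M}$ for $\sum_j p_j x p_j$ to stay in $\mathcal{M}$. Producing such a copy of $L^\infty(Y,\eta)$ inside an arbitrary intermediate $\mathcal{M}$ is precisely the rigidity input the paper supplies: it runs a dichotomy on whether $\mathbb{E}_\tau\circ\mathbb{E}$ is $\Gamma$-invariant on $\mathcal{M}$; in the invariant case it shows directly that $\mathbb{E}(\mathcal{M})\subset\mathcal{N}$ (so $\mathcal{M}=\mathcal{N}\rtimes\Lambda$), and in the non-invariant case it uses the non-commutative Nevo--Zimmer theorem (for $d=1$) or the $G_i$-continuous-element machinery together with Theorem \ref{thm:invarince undertensor} (for $d\geq 2$) to embed $L^\infty(G/Q,\nu_Q)$ or $L^\infty(G_i/P_i,\nu_{P_i})$ into $\mathcal{M}$. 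Without this step your argument does not get off the ground; if the splitting were automatic from containing $\mathcal{N}\rtimes\Lambda$, the boundary-rigidity hypotheses (higher rank, irreducibility) would be superfluous.

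Your second stage --- classifying the $\Lambda$-invariant intermediate algebras $\mathcal{N}\subset\mathcal{M}_0\subset L^\infty(G/P,\nu_P)\overline{\otimes}\mathcal{N}$ --- is closer in spirit to what the paper does (Theorem \ref{thm:tensorproducts}), but you misplace the difficulty and leave the key mechanism unspecified. The paper does not ``average against $G_i$-orbits''; it induces the inclusion to $G$ via tensoring with $L^\infty(G/\Gamma,m_\Gamma)$ and then applies the non-commutative Bader--Shalom theorem (for $d\geq 2$) or the non-commutative Stuck--Nevo--Zimmer theorem together with a disintegration over the center and an invariant-random-subalgebra rigidity lemma (for $d=1$), finally pushing back down with the state $m_\Gamma\otimes\mathrm{id}$. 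Your concern about replacing Radon--Nikodym derivatives by operator-valued analogs is sidestepped entirely by this induction trick. As written, the proposal omits the two decisive inputs (the Nevo--Zimmer dichotomy for the crossed-product splitting, and the induction-to-$G$ argument for the coefficient classification), so it cannot be completed along the lines sketched.
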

The above theorem provides new examples of a non-commutative analogue of the intermediate factor theorem
for product lattices, extending the rigidity result obtained in
Boutonnet-Houdayer~\cite{boutonnet2023noncommutative} to the tensor-product setting. 

The proof relies on four main ingredients and essentially follows the same strategy employed in \cite{boutonnet2023noncommutative}.
First, for $d\geq 2$, we make essential use of the notion of $G_i$–continuous elements,
introduced and studied in \cite{bader2022charmenability} and further dealt with in \cite{boutonnet2023noncommutative}. Roughly speaking, we show that $G_i$-continuous elements remain invariant under tensoring by the tracial von Neumann algebra $(\cN,\tau)$. 
\begin{thm}
\label{thm:invarince undertensor}
Following the notations as in Theorem \ref{thm:mainsplitting}, assume that $d\geq 2$. Then for each $1 \le i \le d$, the von Neumann subalgebra $\mathcal{M}_i$ of $G_i$–continuous elements in $\left(L^\infty(G/P, \nu_{P})\overline{\otimes} \mathcal{N}\right) \rtimes \Lambda$
is $L^\infty(G_i/P_i, \nu_{P_i})$.
\end{thm}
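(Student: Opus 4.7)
The plan is a two-step reduction to the untensored statement already proved in \cite{boutonnet2023noncommutative}, absorbing the new tensor factor $\cN$ through its tracial structure together with the $G_i$-ergodicity hypothesis.

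I would first check the easy inclusion $L^\infty(G_i/P_i,\nu_{P_i}) \subseteq \mathcal{M}_i$: the factor decomposition $G/P = \prod_j G_j/P_j$ yields a $G_i$-equivariant inclusion $L^\infty(G_i/P_i) \hookrightarrow L^\infty(G/P)\otimes 1 \subseteq (L^\infty(G/P)\overline{\otimes}\cN)\rtimes\Lambda$, and $G_i$-continuity of its image is inherited from the untensored crossed product $L^\infty(G/P)\rtimes\Lambda$, where it is established in \cite{boutonnet2023noncommutative}. This uses only naturality of the notion of $G_i$-continuous element under $\Lambda$-equivariant inclusions that respect the ambient canonical $G$-structure.

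For the reverse inclusion, I would exploit that $\tau$ is $G$-invariant, so the slice map $\id \otimes \tau \colon L^\infty(G/P)\overline{\otimes}\cN \to L^\infty(G/P)$ is both $G$- and $\Lambda$-equivariant, and hence extends to a normal faithful $\Lambda$-equivariant conditional expectation
\[
\tilde{E}_\tau \colon (L^\infty(G/P)\overline{\otimes}\cN)\rtimes\Lambda \longrightarrow L^\infty(G/P)\rtimes\Lambda.
\]
Compatibility of $\tilde{E}_\tau$ with the canonical $G$-equivariant structures will imply that $\tilde{E}_\tau$ preserves $G_i$-continuity, so the untensored version of the theorem gives $\tilde{E}_\tau(\mathcal{M}_i) \subseteq L^\infty(G_i/P_i)$. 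Subtracting, it suffices to show that any $x \in \mathcal{M}_i$ with $\tilde{E}_\tau(x)=0$ must vanish. For this I would observe that the condition $\tilde{E}_\tau(x) = 0$ places the associated standard vector of $x$ inside the subspace modeled on the trace-zero part $L^2(\cN,\tau) \ominus \C\hat{1}$, on which the $G_i$-representation has no nonzero invariant vectors by ergodicity of $G_i$ on $(\cN,\tau)$. Combining this with the relative norm-compactness of the $G_i$-orbit of $x$ (a consequence of $G_i$-continuity) and a mean-ergodic or Mautner-phenomenon argument, one should force $x=0$.

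The main obstacle I anticipate is executing this last step cleanly. The notion of $G_i$-continuity from \cite{bader2022charmenability,boutonnet2023noncommutative} is defined through a canonical $G$-representation on a Hilbert space built from $L^2(G/\Lambda)$-type data rather than through a literal $G_i$-action on the crossed product itself, so one must carefully track this canonical representation through the slice $\tilde{E}_\tau$ and verify its compatibility with the tensor decomposition involving the trace-zero part of $L^2(\cN,\tau)$. Once that compatibility is set up, ergodicity of $G_i$ on $\cN$ can be invoked to annihilate the residual piece after the slice reduction, concluding the identification $\mathcal{M}_i = L^\infty(G_i/P_i,\nu_{P_i})$.
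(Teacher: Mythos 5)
Your reduction has the right shape and its first half is sound: the slice map $\tilde{E}_\tau$ is a normal $\Lambda$-equivariant conditional expectation that carries $G_i$-continuous elements to $G_i$-continuous elements, so $\tilde{E}_\tau(\mathcal{M}_i)\subseteq L^\infty(G_i/P_i)$ follows from the untensored theorem of Boutonnet--Houdayer, and the identity $x=\tilde{E}_\tau(x)+(x-\tilde{E}_\tau(x))$ correctly reduces everything to showing $\mathcal{M}_i\cap\ker\tilde{E}_\tau=\{0\}$. But that last step is the entire difficulty, and the mechanisms you propose for it do not work. First, $G_i$-continuity gives only that the orbit map extends to a $\star$-strongly \emph{continuous} map on the non-compact group $G_i$; it does not give relative norm-compactness of the orbit, so that premise is false. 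Second, you invoke ergodicity of $G_i$ on $\cN$ to kill the trace-zero part, but this points at the wrong hypothesis: the sequences $(\gamma_n)$ witnessing $G_i$-continuity satisfy $p_i(\gamma_n)\to e$ and hence act on $\cN$ asymptotically through $\prod_{j\neq i}G_j$, with $\hat{p}_i(\gamma_n)\to\infty$. What must be used is the ergodicity (indeed, via Howe--Moore, the mixing/metric ergodicity) of the \emph{complementary} factors $G_j$, $j\neq i$ --- this is exactly how the paper computes $\mathcal{C}_i(\cN)\cong(L^\infty(G/\Gamma)\overline{\otimes}\cN)^{\hat{G}_i}\subseteq(L^\infty(G/\Gamma)\overline{\otimes}\cN)^{G_j}=\C$. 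Mere absence of $G_i$-invariant vectors in $L^2(\cN,\tau)\ominus\C\hat{1}$ proves nothing here.

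Third, and most seriously, the kernel of $\tilde{E}_\tau$ mixes the trace-zero direction of $\cN$ with both the $L^\infty(G/P)$ direction and the group direction $\ell^2(\Lambda)$, and on $L^\infty(G/P)$ the measure $\nu_P$ is only quasi-invariant, so $\sigma_{\gamma_n}$ is not isometric on the relevant $L^2$-space and no mean-ergodic or Mautner-type argument applies off the shelf. Controlling this interaction is precisely where the paper's proof does its real work: it first localizes $\mathcal{M}_i$ into the coefficient algebra, then runs an essential-value argument combined with the pointwise convergence of the Radon--Nikodym derivatives $\frac{\mathrm{d}\,p_i(\gamma_n)^{-1}\nu_{B_i}}{\mathrm{d}\nu_{B_i}}\to\mathds{1}$ (after passing to a subsequence, as in Nevo--Zimmer) to peel off the factors $B_j$, $j\neq i$, and then $\cN$, before finally computing $\mathcal{C}_i(\cN)=\C$ by induction to $G$ and Howe--Moore. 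Your proposal acknowledges this obstacle but does not overcome it, so as written the argument has a genuine gap at its decisive step.
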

Second, for $d=1$, we need a version of the non-commutative Nevo-Zimmer theorem established in \cite{boutonnet2021stationary,bader2023charmenability} for a $k$-simple connected algebraic $k$-group with $\rk_k(\mathbf G) \geq 2$, which is a direct corollary of \cite[Theorem 5.4]{bader2023charmenability} (see Theorem~\ref{thm:mainembeddingprt}).

Thirdly, we pair the analytic techniques of
Suzuki~\cite{suzuki}
along with the structural arguments of Bader–Boutonnet–Houdayer–Peterson~\cite {bader2022charmenability} to achieve the desired crossed-product rigidity.  As mentioned earlier, we follow the strategy used by Boutonnet-Houdayer~\cite{boutonnet2023noncommutative}, and adapt it to our purpose of providing $\cM=\cM_0\rtimes \Lambda$. 

Finally, the classification of the intermediate von Neumann algebra $\mathcal{M}$ now reduces to understanding the structure of the $\Gamma$-invariant coefficient subalgebra $\mathcal{M}_0$. Using the non-commutative versions of Bader-Shalom \cite{amrutam2025non} and Stuck-Nevo-Zimmer (see Theorem~\ref{thm:nciftfield}), we show that every such intermediate algebra $\cM_0$ is a tensor product. Actually, we have the following more general result without any assumption on the center of lattice.
\begin{thm}
\label{thm:tensorproducts}    
Let $G$ and $P$ be as above. Let $\Gamma<G$ be an irreducible lattice. Let $(\cN,\tau)$ be a trace-preserving $G$-von Neumann algebra such that each $G_i$ acts ergodically on $\cN$. Then, any $\Gamma$-invariant intermediate von Neumann algebra 
$$\mathcal{N}\subset \mathcal{M}_0 \subset L^\infty(G/P, \nu_P) \overline{\otimes} \mathcal{N}$$
is actually a $G$-invariant subalgebra of the form $\mathcal{M}_0=L^\infty(G/Q, \nu_Q) \overline{\otimes} \mathcal{N}$ for some $P<Q<G$.
\end{thm}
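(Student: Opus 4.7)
The plan is to identify $\mathcal{M}_0$ with a tensor product of the form $\mathcal{A}\,\overline{\otimes}\,\mathcal{N}$ for a $\Gamma$-invariant subalgebra $\mathcal{A}\subset L^\infty(G/P,\nu_P)$, and then recognize $\mathcal{A}$ as the pullback of a parabolic quotient of $G/P$. First, set
\[
\mathcal{A} := \{a \in L^\infty(G/P,\nu_P) : a \otimes 1_{\mathcal{N}} \in \mathcal{M}_0\}.
\]
This is manifestly a $\Gamma$-invariant unital von Neumann subalgebra of $L^\infty(G/P)$: the $\Gamma$-action on the tensor product is diagonal, and $\mathcal{M}_0$ is $\Gamma$-invariant. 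Because $\mathcal{N}\subset\mathcal{M}_0$ as well, the inclusion $\mathcal{A}\,\overline{\otimes}\,\mathcal{N}\subset\mathcal{M}_0$ is immediate.

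The crucial step is the reverse inclusion $\mathcal{M}_0\subset \mathcal{A}\,\overline{\otimes}\,\mathcal{N}$. For this I would invoke the non-commutative Bader-Shalom theorem of~\cite{amrutam2025non} when $d\ge 2$, and the non-commutative Stuck-Nevo-Zimmer theorem (Theorem~\ref{thm:nciftfield}) when $d=1$. Concretely, for each normal state $\omega$ on $\mathcal{N}$ the slice map $\id\otimes\omega:L^\infty(G/P)\,\overline{\otimes}\,\mathcal{N}\to L^\infty(G/P)$ carries $\mathcal{M}_0$ into a $\Gamma$-invariant operator subspace of $L^\infty(G/P)$; the cited non-commutative rigidity results, applied via a stabilizer/boundary-rigidity analysis, force the resulting $\Gamma$-invariant slices to sit inside $\mathcal{A}$. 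A standard density argument in the tracial tensor product then yields $\mathcal{M}_0\subset\mathcal{A}\,\overline{\otimes}\,\mathcal{N}$. Here the hypotheses that each $G_i$ acts ergodically on $(\mathcal{N},\tau)$ and that $\Gamma<G$ is irreducible are the essential inputs that make these non-commutative tools applicable.

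Once $\mathcal{M}_0=\mathcal{A}\,\overline{\otimes}\,\mathcal{N}$ is established, identifying $\mathcal{A}$ is the final task. The commutative form of the intermediate factor theorem of Bader-Shalom (for $d\ge 2$) or Stuck-Nevo-Zimmer (for $d=1$), applied to the $\Gamma$-invariant subalgebra $\mathcal{A}\subset L^\infty(G/P,\nu_P)$, upgrades its invariance to the full group $G$. The classical bijection between $G$-invariant unital von Neumann subalgebras of $L^\infty(G/P,\nu_P)$ and standard parabolic subgroups then yields $\mathcal{A}=L^\infty(G/Q,\nu_Q)$ for a unique $P<Q<G$. Combined with the tensor decomposition, this gives $\mathcal{M}_0=L^\infty(G/Q,\nu_Q)\,\overline{\otimes}\,\mathcal{N}$, which is patently $G$-invariant.

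The main obstacle I anticipate is the tensor product splitting step. A priori, an element of $\mathcal{M}_0$ could mix the two tensor factors in a $\Gamma$-equivariant but non-elementary way, and obtaining a clean tensor decomposition requires the full strength of the non-commutative intermediate factor machinery. The non-commutative Bader-Shalom theorem decouples the factors $G_i$ by combining the ergodicity of each $G_i$-action on $\mathcal{N}$ with the irreducibility of the lattice, while the non-commutative Stuck-Nevo-Zimmer theorem handles the higher-rank simple case via non-commutative boundary analysis. Careful bookkeeping of the normal states on $\mathcal{N}$ and their compatibility with the slice maps is where I expect the technical heart of the argument to lie.
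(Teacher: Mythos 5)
The first and last steps of your outline are unproblematic: $\mathcal{A}:=\{a\in L^\infty(G/P,\nu_P): a\otimes 1\in\cM_0\}$ is indeed a $\Gamma$-invariant von Neumann subalgebra with $\mathcal{A}\,\overline{\otimes}\,\cN\subset\cM_0$, and the identification of a $\Gamma$-invariant subalgebra of $L^\infty(G/P,\nu_P)$ with some $L^\infty(G/Q,\nu_Q)$ is available. But the middle step, the reverse inclusion $\cM_0\subset\mathcal{A}\,\overline{\otimes}\,\cN$, is where the entire content of the theorem lives, and your proposed argument for it does not work as stated. The slices $(\id\otimes\omega)(\cM_0)$, $\omega\in\cN_*$, generate a $\Gamma$-invariant weak-$*$ closed operator \emph{subspace} of $L^\infty(G/P,\nu_P)$, not a von Neumann subalgebra, and neither the non-commutative Bader--Shalom theorem of \cite{amrutam2025non} nor Theorem~\ref{thm:nciftfield} says anything about such objects: both classify $G$-\emph{invariant} von Neumann subalgebras of a tensor product $\cN\,\overline{\otimes}\,L^\infty(G/P,\nu_P)$, whereas $\cM_0$ is only assumed $\Gamma$-invariant. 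Nothing in your sketch explains why a slice of an element of $\cM_0$ should satisfy the defining condition $a\otimes 1\in\cM_0$ of $\mathcal{A}$; the phrase \say{applied via a stabilizer/boundary-rigidity analysis} is standing in for the actual argument, which is missing.

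The missing idea is the induction trick that bridges $\Gamma$-invariance and $G$-invariance \emph{before} the non-commutative machinery is invoked. One passes to the $G$-invariant inclusion
$L^\infty(G/\Gamma)\,\overline{\otimes}\,\cN\subset L^\infty(G/\Gamma)\,\overline{\otimes}\,\cM_0\subset (L^\infty(G/\Gamma)\,\overline{\otimes}\,\cN)\,\overline{\otimes}\,L^\infty(G/P,\nu_P)$
obtained by inducing from $\Gamma$ to $G$, checks via the Howe--Moore property (mixing, hence metric ergodicity, of $G_i\curvearrowright G/\Gamma$ together with \cite[Lemma 2.4]{amrutam2025non}) that $L^\infty(G/\Gamma)\,\overline{\otimes}\,\cN$ is still $G_i$-ergodic for each $i$, applies the $G$-equivariant splitting theorems (\cite[Theorem 1.1]{amrutam2025non} when $d\ge 2$, Theorem~\ref{thm:nciftfield} when $d=1$) to obtain $L^\infty(G/\Gamma)\,\overline{\otimes}\,\cM_0=(L^\infty(G/\Gamma)\,\overline{\otimes}\,\cN)\,\overline{\otimes}\,L^\infty(G/Q,\nu_Q)$, and then slices back down with the ucp map $m_\Gamma\otimes\id$. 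On this route the auxiliary algebra $\mathcal{A}$ and the final commutative upgrading step are unnecessary, since the parabolic $Q$ and the $G$-invariance of $\cM_0$ come out of the $G$-equivariant theorem directly. Without some such device your argument never leaves the $\Gamma$-invariant world, where the cited tools do not apply.
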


\subsection*{Convenience} 
Throughout this paper, we assume that all von Neumann algebras have separable preduals.
\section{Proof of Main results}
\subsection{\texorpdfstring{$G_i$}{}-continuous elements}\label{subsection continuous element}
We shall use the following notation in this subsection. Let $d \ge 2$. For each $1\le i\le d$, let $G_i$ be a locally compact second countable group, $\mu_i \in \mathrm{Prob}(G_i)$ be an admissible Borel probability measure and $(B_i, \nu_{B_i})$ be the $(G_i,\mu_i)$–Poisson boundary.
Let
\[
(B, \nu_B) = \prod_{i=1}^d (B_i, \nu_{B_i}).
\]
Then $(B, \nu_B)$ is the $(G,\mu)$–Poisson boundary \cite[Corollary 3.2]{BS06}. Moreover, for each $1 \le i \le d$, let
\[
p_i : G \to G_i
\quad\text{and}\quad
\hat{p}_i : G \to \prod_{j \ne i} G_j
\]
be the canonical projections. Let $\Gamma<G$ be an \textbf{irreducible lattice}, i.e., $\hat{p}_i(\Gamma) <\prod_{j \ne i} G_j$ is a dense subgroup for each $i$. We recall the definition of $G_i$–continuous elements from \cite[Definition~3.3]{bader2022charmenability}.
\begin{definition} ($G_i$–continuous elements)
Let $\Gamma \curvearrowright \mathcal{M}$ be a $\Gamma$–von Neumann algebra with a normal faithful $\Gamma$-ucp map $\mathbb{E}:\mathcal{M}\to L^\infty(B,\nu_B)$. Let $\sigma : \Gamma \to \mathrm{Aut}(\mathcal{M})$ be the action by automorphisms. An element $x \in \mathcal{M}$ is said to be $G_i$–continuous if for every sequence $(\gamma_n) \subset \Gamma$ such that $p_i(\gamma_n) \to e$ in $G_i$, we have $\sigma_{\gamma_n}(x) \to x$ $\star$-strongly in $\mathcal{M}$.
\end{definition}
We remark that  the subset $\mathcal{M}_i$, that consists of all $G_i$–continous elements in $\mathcal{M}$
forms a $\Gamma$–invariant von Neumann subalgebra. Additionally, the action $\Gamma \curvearrowright \mathcal{M}_i$ extends to a continuous action of $G_i$ on $\mathcal{M}_i$ such that $G_j$ acts trivially on $\mathcal{M}_i$ for all $j \ne i$ (see  
\cite[Theorem~5.5]{bader2022charmenability}). One of the main tools used in the proof and main result is the invariance of $G_i$–continuous elements under the process of tensoring by a trace-preserving $\Gamma$–von Neumann algebra $(\mathcal{N},\tau)$. For convenience, we also denote by $\cC_i(\cM)$ the subalgebra of $G_i$-continuous elements in $\cM$. 

Let $(\mathcal{N},\tau)$ be a trace-preserving $\Gamma$-von Neumann algebra. Recall that for a lcsc group $H$, the \textbf{quasi-center} $QZ(H)$  is the (not necessarily closed) subgroup of all elements $h\in H$ for which the centralizer $Z_H(h)$ is open in $H$. We have $Z(H)<QZ(H)$.

\begin{thm}\label{thm:continuous elements are in 0 coefficient}
Assume that $\Gamma$ has finite center and $Z(\Gamma)$ acts trivially on $\cN$, and for each $1 \le i \le d$, $QZ(G_i)=Z(G_i)$. Let $\Lambda=\Gamma/Z(\Gamma)$. Then 
\[\mathcal{C}_i\left(\left(L^\infty(B, \nu_{B})\overline{\otimes} \mathcal{N}\right) \rtimes \Lambda\right)=\mathcal{C}_i\left(L^\infty(B_i, \nu_{B_i})\overline{\otimes}\mathcal{N}\right).\]
\end{thm}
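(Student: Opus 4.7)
The reverse inclusion $\cC_i(L^\infty(B_i,\nu_{B_i}) \overline{\otimes} \cN) \subseteq \cC_i(\cM)$ with $\cM := (L^\infty(B,\nu_B) \overline{\otimes} \cN) \rtimes \Lambda$ is immediate from the naturality of the embedding $L^\infty(B_i)\overline{\otimes}\cN \hookrightarrow \cM$ and the compatibility of the $\Lambda$-actions, so the content is the other direction. My plan is to pick an arbitrary $x \in \cC_i(\cM)$, Fourier-expand it as $x = \sum_{\lambda \in \Lambda} x_\lambda u_\lambda$ with $x_\lambda = E_0(xu_\lambda^*) \in L^\infty(B) \overline{\otimes} \cN$ and $\sum_\lambda \|x_\lambda\|_2^2 < \infty$, and establish successively that (i) $x_\lambda = 0$ for every $\lambda \neq e$ and (ii) $x = x_e$ already lies in $L^\infty(B_i) \overline{\otimes} \cN$. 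Throughout I will use that conjugation in the crossed product reads $\sigma_\gamma(x) = \sum_\mu \sigma_\gamma(x_{\bar\gamma^{-1}\mu\bar\gamma})\, u_\mu$.

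For step (i), applying $G_i$-continuity to a sequence $\gamma_n \in \Gamma$ with $p_i(\gamma_n) \to e$ and extracting Fourier coefficients gives $\|x_{\bar\gamma_n^{-1}\mu\bar\gamma_n}\|_2 \to \|x_\mu\|_2$ for every $\mu$, because $\sigma_{\gamma_n}$ is an $L^2$-isometry. Fix $\mu \neq e$ with a lift $\gamma_0 \in \Gamma \setminus Z(\Gamma)$; by the finite-center hypothesis $Z(\Gamma) = \Gamma \cap Z(G)$, so some $p_k(\gamma_0)$ lies outside $Z(G_k)$. I would then inductively produce $\gamma_n$ with $p_i(\gamma_n) \to e$ and $\{\bar\gamma_n^{-1}\mu\bar\gamma_n\}$ infinite in $\Lambda$; once this is achieved, square-summability forces $\|x_\mu\|_2 = 0$. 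If such a $k$ is different from $i$, the $G_k$-conjugacy class of $p_k(\gamma_0)$ is infinite and density of $\hat{p}_i(\Gamma) \subseteq \hat{G}_i$ lets me arrange for $\hat{p}_i(\gamma_n)^{-1}\hat{p}_i(\gamma_0)\hat{p}_i(\gamma_n)$ to avoid the finitely many conjugates produced so far while keeping $p_i(\gamma_n)$ in a shrinking neighbourhood of $e$. If instead $\hat{p}_i(\gamma_0) \in Z(\hat{G}_i)$, then $p_i(\gamma_0) \notin Z(G_i) = QZ(G_i)$, so the centralizer $Z_{G_i}(p_i(\gamma_0))$ is not open and a Baire-type induction using density of $p_i(\Gamma) \subseteq G_i$ supplies the required $\gamma_n$. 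This second case is precisely where the $QZ(G_i) = Z(G_i)$ hypothesis intervenes.

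For step (ii), I would decompose the $G$-action on $L^\infty(B) \overline{\otimes} \cN$ as $\sigma_\gamma = \alpha^{(i)}_{p_i(\gamma)} \circ \alpha^{(\hat{i})}_{\hat{p}_i(\gamma)}$, with $\alpha^{(i)}$ the commuting $G_i$-action (Poisson boundary action on $L^\infty(B_i)$ together with the restricted $G_i$-action on $\cN$) and $\alpha^{(\hat{i})}$ the commuting $\hat{G}_i$-action. Since $\alpha^{(i)}_{p_i(\gamma_n)} \to \mathrm{id}$ strongly whenever $p_i(\gamma_n) \to e$, the $G_i$-continuity of $x$ forces $\alpha^{(\hat{i})}_{\hat{p}_i(\gamma_n)}(x) \to x$ strongly in $L^2$. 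Because $\Gamma$ is discrete in $G$ with $\ker(p_i|_\Gamma) \subseteq Z(\Gamma)$ finite, any sequence $\gamma_n \notin Z(\Gamma)$ with $p_i(\gamma_n) \to e$ automatically has $\hat{p}_i(\gamma_n) \to \infty$ in $\hat{G}_i$. I would then invoke the Howe--Moore-type vanishing of matrix coefficients for the $\hat{G}_i$-representation on $L^2(\hat{B}_i,\nu_{\hat{B}_i}) \otimes L^2(\cN,\tau)$: the joint $\hat{G}_i$-invariants reduce to $\mathbb{C}$ because each $G_j \curvearrowright \cN$ is ergodic (so $\cN^{\hat{G}_i}=\mathbb{C}$) and $\hat{B}_i$ is a Poisson boundary, so $\alpha^{(\hat{i})}_{\hat{p}_i(\gamma_n)}(x)$ tends weakly to the projection $P(x)$ onto $(L^\infty(B)\overline{\otimes}\cN)^{\hat{G}_i} = L^\infty(B_i) \otimes \mathbb{C}$. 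Comparing with the strong limit gives $x = P(x) \in L^\infty(B_i) \subseteq L^\infty(B_i) \overline{\otimes} \cN$, and since $L^\infty(B_i)$ is automatically $G_i$-continuous this places $x$ in $\cC_i(L^\infty(B_i) \overline{\otimes} \cN)$.

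The main obstacle is the Howe--Moore input in step (ii): reconciling the strong convergence produced by $G_i$-continuity with the weak vanishing of $\hat{G}_i$ matrix coefficients at infinity is what actually forces $x$ into the $\hat{G}_i$-invariants, and this mixing property is not part of the $QZ(G_i) = Z(G_i)$ hypothesis itself. In the paper's main application (Theorem~\ref{thm:mainsplitting}) the $G_i$ are simply connected simple algebraic groups, so Howe--Moore applies directly to the tensor representation; in the more abstract lcsc setting of this subsection one would need a suitable boundary-theoretic substitute, typically the SAT/mildly-mixing property of Poisson boundaries combined with the relative ergodicity of $\cN$ over $\mathbb{C}$.
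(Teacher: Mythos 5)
Your step (ii) is where the argument genuinely breaks down, and the issue is more structural than the ``suitable boundary-theoretic substitute'' caveat you append at the end. The theorem is stated in the abstract setting of Subsection~2.1: $(\cN,\tau)$ is merely a trace-preserving $\Gamma$-von Neumann algebra, with no ambient $G$-action, no ergodicity of the $G_j$, and no Howe--Moore hypothesis; the only standing assumptions are finite center, triviality of $Z(\Gamma)$ on $\cN$, and $QZ(G_i)=Z(G_i)$. Your mixing argument uses all three missing ingredients, and --- more tellingly --- it aims at the wrong conclusion: you land $x$ in $(L^\infty(B)\overline{\otimes}\cN)^{\hat G_i}=L^\infty(B_i)\otimes\C 1$, which is strictly stronger than the stated conclusion $x\in\cC_i(L^\infty(B_i)\overline{\otimes}\cN)$ and is simply false at this level of generality (take $\Gamma$ to act trivially on $\cN$; then every $1\otimes n$ is $G_i$-continuous but not scalar in the $\cN$-leg). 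What actually has to be proved is only that $x$, viewed as a function of the $\hat B_i$-coordinate with values in $L^\infty(B_i)\overline{\otimes}\cN$, is essentially constant. The paper gets this not from Howe--Moore but from the contracting property of the boundary action (\cite[Lemma~2.2]{boutonnet2023noncommutative}): one produces $\gamma_n$ with $p_i(\gamma_n)\to e$ and $\nu_{\hat B_i}(\hat p_i(\gamma_n)E_\epsilon)\to 1$ for the set $E_\epsilon$ where $f$ is $\epsilon$-close to an essential value $y$, and then runs the triangle-inequality estimate of \cite[Lemma~5.4]{bader2022charmenability}, using $L^1$-convergence of the Radon--Nikodym derivatives $\d p_i(\gamma_n)^{-1}\nu_{B_i}/\d\nu_{B_i}\to 1$ to control the failure of $\sigma_{\gamma_n}$ to preserve $\nu_{B_i}\otimes\tau$. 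The further reductions $\cC_i(L^\infty(B_i)\overline{\otimes}\cN)=L^\infty(B_i)\overline{\otimes}\cC_i(\cN)$ and $\cC_i(\cN)=\C$ are separate later statements (Theorems~\ref{thm:continuouselements} and~\ref{Ni=C}), and only the last one invokes Howe--Moore and $G_i$-ergodicity.

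A smaller but real defect in step (i): you justify $\|x_{\bar\gamma_n^{-1}\mu\bar\gamma_n}\|_2\to\|x_\mu\|_2$ by asserting that $\sigma_{\gamma_n}$ is an $L^2$-isometry, but the Poisson boundary action is only nonsingular, not $(\nu_B\otimes\tau)$-preserving, so this is false as stated; the vanishing of the nontrivial Fourier coefficients requires the more careful singularity argument of \cite[Theorem~2.1 and Corollary~2.3]{boutonnet2023noncommutative}, which the paper cites wholesale rather than reproving. The conjugacy-class/quasi-center combinatorics you sketch is in the right spirit for that citation, but the quantitative step connecting $\star$-strong convergence to convergence of the individual coefficient norms needs to account for the Radon--Nikodym cocycle.
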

\begin{proof}
Without any loss of generality, we may assume $d=2$ and consider the case that $i=1$. Following the same proof of \cite[Theorem 2.1 and Corollary 2.3]{boutonnet2023noncommutative}, we have
$$\cC_1\left(\left(L^\infty(B, \nu_{B})\overline{\otimes} \mathcal{N}\right) \rtimes \Lambda\right)= \mathcal{C}_1\left(L^\infty(B, \nu_{B})\overline{\otimes} \mathcal{N}\right).$$
Let $f\in L^\infty(B_2, L^\infty(B_1)\overline{\otimes}\cN)\cong L^\infty(B)\overline{\otimes}\cN$ be a $G_1$-continuous element and $y\in L^\infty(B_1)\overline{\otimes}\cN$ be an essential value of $f$. We only need to prove $\|f-1\otimes y \|_{\nu_{B_2}}=0$. Fix $\epsilon>0$. 
Let $E_\epsilon=\{b\in B_2\mid \|f(b)-y\|_{\nu_{B_1}\otimes\tau}<\epsilon\}$. Then by \cite[Lemma 2.2]{boutonnet2023noncommutative}, there exists a sequence $(\gamma_n)\subset\Gamma$ such that $\nu_{B_2}(p_2(\gamma_n)E_\epsilon)\to1$ and $p_1(\gamma_n)\to e$. 

To prove $\|f-1\otimes y \|_{\nu_{B_2}}=0$, we will follow the same proof of \cite[Lemma 5.4]{bader2022charmenability}. First, we will show that by passing to some subsequence of $(\gamma_n)$, we have $\|\sigma_{p_1(\gamma_n)}y- y\|_{\nu_{B_1}\otimes\tau}\to0$. 

Following the proof of \cite[Proposition 1.14]{NZ00}, by passing to a subsequence, we may assume $\frac{\d p_1(\gamma_n)^{-1}\nu_{B_1}}{\d\nu_{B_1}}\to \mathds{1}_{B_1}$ on $B_1$ pointwise almost everywhere. Since $\left(\frac{\d p_1(\gamma_n)^{-1}\nu_{B_1}}{\d\nu_{B_1}}\right)$ is uniformly $\|\cdot\|_{L^1(B_1,\nu_{B_1})}$-bounded, we have $\|\frac{\d p_1(\gamma_n)^{-1}\nu_{B_1}}{\d\nu_{B_1}}-\mathds{1}_{B_1}\|_{L^1(B_1,\nu_{B_1})}\to 0$. Hence we also have that the functional norm $\|(\nu_{B_1}\otimes\tau)\circ\sigma_{\gamma_n}-\nu_{B_1}\otimes\tau\|\to 0$. And we have
\begin{align*}
&\nu_{B_2}(E_\epsilon\cap p_2(\gamma_n)E_\epsilon) \cdot\|\sigma_{p_1(\gamma_n)}y- y\|_{\nu_{B_1}\otimes\tau}^2\\
\leq & 3\int_{E_\epsilon\cap p_2(\gamma_n)E_\epsilon}\|\gamma_nf(\gamma_n^{-1}b)-f(b)\|_{\nu_{B_1}\otimes\tau}^2\d \nu_{B_2}(b)\\
&+3\int_{E_\epsilon\cap p_2(\gamma_n)E_\epsilon}\|f(b)-y\|_{\nu_{B_1}\otimes\tau}^2\d \nu_{B_2}(b)\\
&+3\int_{E_\epsilon\cap p_2(\gamma_n)E_\epsilon}\|\gamma_n f(\gamma_n^{-1}b)-\gamma_ny\|_{\nu_{B_1}\otimes\tau}^2\d \nu_{B_2}(b)\\
:=& 3 X_n +3 Y_n +3 Z_n .
\end{align*}
When $n\to \infty$, since $\nu_{B_2}(E_\epsilon\cap p_2(\gamma_n)E_\epsilon)\to \nu_{B_2}(E_\epsilon)$, we have
$$ X_n \leq \|\sigma_{\gamma_n}f-f\|_{\nu_{B_2}\otimes\nu_{B_1}\otimes\tau}^2 \to 0;$$
$$ Y_n \leq \int_{E_\epsilon\cap p_2(\gamma_n)E_\epsilon}\epsilon^2\d \nu_{B_2}(b) \to \epsilon^2 \cdot\nu_{B_2}(E_\epsilon);$$
Let $f_n=(b\mapsto f(\gamma_n^{-1}b)-y)\in L^\infty(B_2, L^\infty(B_1)\overline{\otimes}\cN)$. Then $\|f_n\|_\infty\leq 2\|f\|_\infty$ and $\|f_n(b)\|_{\nu_{B_1}\otimes\tau}\leq \epsilon$ for $b\in p_2(\gamma_n)E_\epsilon$. Hence we have
\begin{align*}
 Z_n =& \int_{E_\epsilon\cap p_2(\gamma_n)E_\epsilon}\|\gamma_n(f_n(b))\|_{\nu_{B_1}\otimes\tau}^2 \d \nu_{B_2}(b)\\
 =&  \int_{E_\epsilon\cap p_2(\gamma_n)E_\epsilon}(\nu_{B_1}\otimes\tau)((\gamma_n-1)(f_n(b)^*f_n(b)))+ (\nu_{B_1}\otimes\tau)((f_n(b)^*f_n(b))) \d \nu_{B_2}(b)\\
\leq& \int_{E_\epsilon\cap p_2(\gamma_n)E_\epsilon}  \|(\nu_{B_1}\otimes\tau)\circ\sigma_{\gamma_n}-\nu_{B_1}\otimes\tau\| \cdot \|f_n\|_\infty^2 + \epsilon^2 \d \nu_{B_2}(b)\\
\to & \epsilon^2 \cdot \nu_{B_2}(E_\epsilon).
\end{align*}
Therefore,
$$\nu_{B_2}(E_\epsilon)\cdot\limsup_n \|\sigma_{p_1(\gamma_n)}y- y\|_{\nu_{B_1}\otimes\tau}^2 \leq \limsup_n 3( X_n + Y_n + Z_n )\leq 6\epsilon^2\cdot\nu_{B_2}(E_\epsilon).$$
Hence we get $\limsup_n \|\sigma_{p_1(\gamma_n)}y- y\|_{\nu_{B_1}\otimes\tau}^2\leq 6\epsilon^2$ for any $\epsilon>0$. Therefore, we have $\|\sigma_{p_1(\gamma_n)}y- y\|_{\nu_{B_1}\otimes\tau}\to 0$. 

Now by following the same proof of \cite[Lemma 5.4]{bader2022charmenability}, we have $\|f-1\otimes y \|_{\nu_{B_2}}=0$, which finishes the proof. 
\end{proof}
As mentioned earlier, the functoriality of $G_i$-continuous elements behaves well with respect to the tensor product by tracial von Neumann algebras. We make it precise below. 
\begin{thm}
\label{thm:continuouselements}
The subalgebra of $G_i$-continuous elements in $L^\infty(B_i,\nu_{B_i})\overline{\otimes}\mathcal{N}\cong L^\infty(B_i,\cN)$ is $L^\infty(B_i,\nu_{B_i})\overline{\otimes}\mathcal{C}_i(\mathcal{N})\cong L^\infty(B_i, \mathcal{C}_i(\mathcal{N}))$.
\end{thm}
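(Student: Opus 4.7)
The plan is to prove both inclusions separately, with the non-trivial direction essentially reusing the essential-value computation carried out at the end of the proof of Theorem~\ref{thm:continuous elements are in 0 coefficient}.

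For the easy inclusion $L^\infty(B_i,\nu_{B_i})\overline{\otimes}\mathcal{C}_i(\mathcal{N})\subseteq\mathcal{C}_i\bigl(L^\infty(B_i,\nu_{B_i})\overline{\otimes}\mathcal{N}\bigr)$ I would first check $G_i$-continuity on elementary tensors $f\otimes y$ with $f\in L^\infty(B_i)$ and $y\in\mathcal{C}_i(\mathcal{N})$. For any $(\gamma_n)\subset\Gamma$ with $p_i(\gamma_n)\to e$, strong continuity of the $G_i$-action on the Poisson boundary gives $\sigma_{\gamma_n}(f)=\sigma_{p_i(\gamma_n)}(f)\to f$ $\star$-strongly, and the $G_i$-continuity of $y$ gives $\sigma_{\gamma_n}(y)\to y$ $\star$-strongly; a uniformly bounded splitting of $\sigma_{\gamma_n}(f\otimes y)-f\otimes y$ then delivers the desired convergence. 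Since the $G_i$-continuous elements form a WOT-closed $\star$-subalgebra, this propagates to all of $L^\infty(B_i)\overline{\otimes}\mathcal{C}_i(\mathcal{N})$.

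For the reverse inclusion I would take a $G_i$-continuous $F\in L^\infty(B_i,\mathcal{N})$ and show $F(b)\in\mathcal{C}_i(\mathcal{N})$ for $\nu_{B_i}$-a.e.\ $b$. Separability of the predual lets me view $F$ as a $\|\cdot\|_\tau$-measurable map $B_i\to\mathcal{N}$ with a bounded, $\|\cdot\|_\tau$-closed essential range; since $\mathcal{C}_i(\mathcal{N})$ is $\|\cdot\|_\tau$-closed on bounded subsets of $\mathcal{N}$, it suffices to show that every essential value $y$ of $F$ lies in $\mathcal{C}_i(\mathcal{N})$. To that end I fix such a $y$, an arbitrary sequence $(\gamma_n)\subset\Gamma$ with $p_i(\gamma_n)\to e$, and $\epsilon>0$, and set $E_\epsilon=\{b\in B_i:\|F(b)-y\|_\tau<\epsilon\}$, which has positive measure. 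On $E_\epsilon\cap p_i(\gamma_n)E_\epsilon$, the triangle inequality combined with the $\|\cdot\|_\tau$-isometry property of $\sigma_{\gamma_n}$ on $\mathcal{N}$ yields the pointwise bound $\|\sigma_{\gamma_n}(y)-y\|_\tau\le 2\epsilon+\|(\sigma_{\gamma_n}F)(b)-F(b)\|_\tau$; squaring, integrating, and dividing by $\nu_{B_i}(E_\epsilon\cap p_i(\gamma_n)E_\epsilon)$ produces
\[
\|\sigma_{\gamma_n}(y)-y\|_\tau^{2}\;\le\;6\epsilon^{2}+\frac{3\,\|\sigma_{\gamma_n}(F)-F\|_{\nu_{B_i}\otimes\tau}^{2}}{\nu_{B_i}\bigl(E_\epsilon\cap p_i(\gamma_n)E_\epsilon\bigr)}.
\]
The numerator of the second term vanishes by the $G_i$-continuity of $F$, while strong continuity of the $G_i$-action on $L^\infty(B_i)$ gives $\nu_{B_i}(E_\epsilon\cap p_i(\gamma_n)E_\epsilon)\to\nu_{B_i}(E_\epsilon)>0$; sending $n\to\infty$ and then $\epsilon\to 0$ yields $\|\sigma_{\gamma_n}(y)-y\|_\tau\to 0$, which coincides with $\star$-strong convergence on the norm-bounded set $\{\sigma_{\gamma_n}(y)\}$ in the tracial algebra $(\mathcal{N},\tau)$.

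The main obstacle I anticipate is the passage from the global $L^{2}$-type $G_i$-continuity of $F$ to the pointwise statement $F(b)\in\mathcal{C}_i(\mathcal{N})$, and routing the argument through the essential range is precisely what avoids a single-null-set issue across all sequences $(\gamma_n)$; the quantitative inequality itself is a clean adaptation of the computation of \cite[Lemma~5.4]{bader2022charmenability} already employed in Theorem~\ref{thm:continuous elements are in 0 coefficient}.
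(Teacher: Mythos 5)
Your proposal is correct and follows essentially the same route as the paper's proof: the easy inclusion is handled by checking elementary tensors, and the reverse inclusion fixes an essential value $y$ of $F$, works on $E_\epsilon\cap p_i(\gamma_n)E_\epsilon$, and uses the triangle inequality together with the trace-preserving (hence $\|\cdot\|_{\tau,2}$-isometric) action to bound $\|\sigma_{\gamma_n}(y)-y\|_\tau$ by $2\epsilon$ plus the $L^2$-defect of $F$, exactly as in the paper (which gets the constant $8\epsilon^2$ where you get $6\epsilon^2$). The only cosmetic difference is that you spell out the reduction to essential values and the closedness of $\mathcal{C}_i(\mathcal{N})$ on bounded sets, which the paper leaves implicit.
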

\begin{proof}
Without any loss of generality, we may still assume $d=2$ and consider the case that $i=1$. The inclusion $L^\infty(B_1,\nu_{B_1})\overline{\otimes}\mathcal{C}_1(\mathcal{N})\subset\cC_1(L^\infty(B_1,\nu_{B_1})\overline{\otimes}\mathcal{N})$ is clear.

Conversely, assume that $f\in L^\infty(B_1,\cN)$ is a $G_1$-continuous element. Fix an essential value $y\in \cN$ of $f$. Then we only need to prove that $y\in\cC_1(\cN)$. For any $\epsilon>0$, let
$$E_\epsilon=\{b\in B_1\mid \Vert f(b)-y\Vert_{\tau,2}<\epsilon\}.$$
Then $\nu_{B_1}(E_\epsilon)>0$. Take a sequence $(\gamma_n)\subset\Gamma$ with $p_1(\gamma_n)\to e$ in $G_1$. Then for $b\in E_\epsilon\cap \gamma_n E_\epsilon$, we have
\begin{align*}
&\Vert \gamma_ny-y\Vert_{\tau,2}\\
\leq &\Vert\gamma_nf(\gamma_n^{-1}b)-f(b)\Vert_{\tau,2} +\Vert\gamma_nf(\gamma_n^{-1}b)-\gamma_n y\Vert_{\tau,2}+\Vert f(b)-y\Vert_{\tau,2}\\
\leq &\Vert\gamma_nf(\gamma_n^{-1}b)-f(b)\Vert_{\tau,2}+2\epsilon.  
\end{align*}
Hence
\begin{align*}
&\nu_{B_1}(E_\epsilon\cap \gamma_n E_\epsilon)\cdot \Vert \gamma_ny-y\Vert_{\tau,2}^2\\
\leq &8\epsilon^2\cdot\nu_{B_1}(E_\epsilon\cap \gamma_n E_\epsilon)+2\int_{E_\epsilon\cap \gamma_n E_\epsilon}\Vert\gamma_nf(\gamma_n^{-1}b)-f(b)\Vert_{\tau,2}^2\d \nu_{B_1}(b)\\
\leq & 8\epsilon^2\cdot\nu_{B_1}(E_\epsilon\cap \gamma_n E_\epsilon)+2 \Vert\sigma_{\gamma_n}(f)-f\Vert_{\nu_{B_1}\otimes\tau,2}^2.
\end{align*}
Since $\mathds{1}_{E_\omega}$ and $f$ are $G_1$-continuous, when $n\to \infty$, we have $\nu_{B_1}(E_\epsilon\cap \gamma_n E_\epsilon)\to \nu_{B_1}(E_\epsilon)$ and $\Vert\sigma_{\gamma_n}(f)-f\Vert_{\nu_{B_1}\otimes\tau,2}\to 0$. Therefore, we have $\limsup_n\Vert \gamma_ny-y\Vert_{\tau,2}^2\leq8\epsilon^2$ for any $\epsilon>0$. Hence $y\in\cC_1(\cN)$ and $f\in L^\infty(B_1,\cC_1(\cN))$.
\end{proof}
We now state the conditions under which we get that $\cC_i(\cN)=\C$. Following \cite{HM79, CCLRV11}, recall that a locally compact second countable group $H$ has the \textbf{Howe-Moore property} if any ergodic pmp action of $H$ on a diffuse probability space is necessarily mixing. For example, for a local field $k$, any simply connected $k$-isotropic almost $k$-simple linear algebraic $k$-group (even rank 1) has the Howe-Moore property \cite{HM79}.
\begin{thm}\label{Ni=C} Let $d\ge 2$. Assume that for each $1\le i\le d$, $G_i$ has the Howe-Moore property, and $(\cN,\tau)$ admits a trace-preserving $G$-action that extends the $\Gamma$-action. Also assume that $\cN$ is $G_i$-ergodic for each $1\le i\le d$. Then
\begin{itemize}
    \item [(i)]$\cC_i(\cN)=\C$;
    \item [(ii)] Any $\Gamma$-invariant intermediate von Neumann subalgebra $\cM_0$ with
    $$\cN\subset\cM_0\subset \cN\overline{\otimes}L^\infty(B,\nu_B)$$
    is of the form $\cM_0=\cN\overline{\otimes}L^\infty(C,\nu_C)$ for a $(G,\mu)$-boundary $(C,\nu_C)$.
\end{itemize}
\end{thm}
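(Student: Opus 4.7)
The plan is to establish part (i) first via a Howe--Moore argument, and then derive (ii) by combining (i) with Theorem~\ref{thm:continuouselements}, the classical Bader--Shalom theorem for the Poisson boundary, and a non-commutative splitting argument.

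For (i), fix $i$ and take $x\in\cC_i(\cN)$ with $\tau(x)=0$; the goal is to show $x=0$. Irreducibility of $\Gamma$ makes each projection $p_j(\Gamma)$ dense in $G_j$, so one can pick distinct $\gamma_n\in\Gamma$ with $p_i(\gamma_n)\to e$; discreteness of $\Gamma$ in $G$ then forces $\gamma_n\to\infty$ in $G$. On the one hand, $x\in\cC_i(\cN)$ gives $\sigma_{\gamma_n}(x)\to x$ strongly, whence $\langle\sigma_{\gamma_n}(x),x\rangle_\tau\to\|x\|_{\tau,2}^2$. On the other hand, set $V:=L^2(\cN,\tau)\ominus\C\cdot 1$, regarded as a unitary $G$-representation. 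By $G_j$-ergodicity of $\cN$, one has $V^{G_j}=0$ for each $j$. The Howe--Moore property of each $G_j$, combined with the standard decomposition of irreducible representations of the product $G=\prod_j G_j$ as tensor products $\bigotimes_j\pi_j$, then implies that matrix coefficients of $V$ vanish at infinity in $G$. Hence $\langle\sigma_{\gamma_n}(x),x\rangle_\tau\to 0$, so $\|x\|_{\tau,2}^2=0$.

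For (ii), let $\cM_0$ be $\Gamma$-invariant with $\cN\subset\cM_0\subset\cN\overline{\otimes}L^\infty(B,\nu_B)$. The first step is to extract the abelian piece: set $\cA:=\cM_0\cap L^\infty(B,\nu_B)$, where $L^\infty(B,\nu_B)$ is embedded as $\C\overline{\otimes}L^\infty(B)\subset\cN\overline{\otimes}L^\infty(B)$. Then $\cA$ is a $\Gamma$-invariant von Neumann subalgebra of $L^\infty(B,\nu_B)$; by the classical Bader--Shalom theorem for irreducible lattices, $\cA$ is automatically $G$-invariant and equals $L^\infty(C,\nu_C)$ for some $(G,\mu)$-boundary $(C,\nu_C)$. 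Since $\cN$ and $L^\infty(C)$ commute inside $\cN\overline{\otimes}L^\infty(B)$ and both sit in $\cM_0$, we obtain the inclusion $\cN\overline{\otimes}L^\infty(C)\subset\cM_0$.

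The reverse inclusion $\cM_0\subset\cN\overline{\otimes}L^\infty(C)$ is the hardest step, and it is where I anticipate the bulk of the technical work. I plan to settle it using the non-commutative Bader--Shalom theorem of~\cite{amrutam2025non} together with the non-commutative Stuck--Nevo--Zimmer argument of Theorem~\ref{thm:nciftfield}. The rigidity $\cC_i(\cN)=\C$ established in (i) plays a crucial role: it rules out non-scalar $G_i$-continuous elements in $\cN$, so via Theorem~\ref{thm:continuouselements} one sees that $\cC_i(\cM_0)=\cM_0\cap L^\infty(B_i,\nu_{B_i})$ captures exactly the ``commutative'' $G_i$-continuous content of $\cM_0$. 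The non-commutative machinery then provides a Suzuki-style averaging by which any element $x\in\cM_0$ is decomposed into pieces that either lie in $\cN$ or factor through the $\Gamma$-equivariant quotient $B\to C$, forcing $x\in\cN\overline{\otimes}L^\infty(C)$ as desired.
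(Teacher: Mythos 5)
Your part (i) follows a genuinely different route from the paper's (the paper identifies $\cC_i(\cN)$ with $\big(L^\infty(G/\Gamma)\overline{\otimes}\cN\big)^{\hat G_i}$ via \cite[Theorem 5.6]{bader2022charmenability} and kills it using $G_j$-ergodicity of the induced algebra), but your Howe--Moore step outruns the hypotheses. The theorem is stated for arbitrary lcsc groups $G_i$ with the Howe--Moore property \emph{as defined in the paper}, i.e.\ ergodic pmp actions on probability spaces are mixing. Your argument needs the representation-theoretic form (matrix coefficients of a representation without invariant vectors vanish at infinity), applied moreover to the non-commutative space $L^2(\cN,\tau)\ominus\C 1$, and in addition a \emph{product} version of it: the tensor decomposition of irreducibles of $\prod_j G_j$ that you invoke requires the $G_j$ to be type I, which is not assumed, and even granting it you must control $\langle\pi(a_n)\pi(b_n)\xi,\xi\rangle$ where $a_n,b_n$ escape to infinity in different factors --- two commuting WOT-null sequences of unitaries can have a non-null product, so \say{each factor is $C_0$} does not formally conclude. (For $d=2$ your reduction is clean, since $\sigma_{p_i(\gamma_n)}\to\mathrm{id}$ pointwise in $\|\cdot\|_{\tau,2}$ and only one other factor remains; for $d\ge 3$ the complement $\hat G_i$ is again a product and the problem recurs.) These points are all repairable for the simple algebraic groups of the main theorem, where the classical Howe--Moore theorem gives exactly the product statement you want; but the paper's proof is designed so that Howe--Moore is only ever applied to the honest pmp action $G_j\curvearrowright(G/\Gamma,m_\Gamma)$, with \cite[Lemma 2.4]{amrutam2025non} absorbing the tensor factor $\cN$, which is what makes the stated generality legitimate.

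Part (ii) has a genuine gap. Extracting $\mathcal{A}=\cM_0\cap L^\infty(B,\nu_B)$ and identifying it with some $L^\infty(C,\nu_C)$ only yields the easy inclusion $\cN\overline{\otimes}L^\infty(C,\nu_C)\subset\cM_0$; there is no a priori reason that $\cM_0$ is generated by $\cN$ together with $\cM_0\cap L^\infty(B,\nu_B)$, and the reverse inclusion --- which you yourself flag as the crux --- is never actually argued: \say{the non-commutative machinery then provides a Suzuki-style averaging} is a statement of intent, not a proof, and Theorem~\ref{thm:nciftfield} (the $d=1$ higher-rank simple Stuck--Nevo--Zimmer statement) is not the relevant tool for the product case. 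The missing idea is \emph{induction to $G$}: one passes to the $G$-invariant inclusion
\[
L^\infty(G/\Gamma)\overline{\otimes}\cN\ \subset\ L^\infty(G/\Gamma)\overline{\otimes}\cM_0\ \subset\ \big(L^\infty(G/\Gamma)\overline{\otimes}\cN\big)\overline{\otimes}L^\infty(B,\nu_B),
\]
verifies that $L^\infty(G/\Gamma)\overline{\otimes}\cN$ is $G_i$-ergodic for each $i$ (mixing of $G_i\curvearrowright G/\Gamma$ from Howe--Moore, then \cite[Lemma 2.4]{amrutam2025non}), applies the non-commutative Bader--Shalom theorem \cite[Theorem 1.1]{amrutam2025non} to conclude $L^\infty(G/\Gamma)\overline{\otimes}\cM_0=\big(L^\infty(G/\Gamma)\overline{\otimes}\cN\big)\overline{\otimes}L^\infty(C,\nu_C)$, and finally slices with $m_\Gamma\otimes\id$ to descend to $\cM_0=\cN\overline{\otimes}L^\infty(C,\nu_C)$. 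Without this induction you have no mechanism for feeding the merely $\Gamma$-invariant $\cM_0$ into a classification theorem for $G$-invariant subalgebras, and as written part (ii) remains unproved.
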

\begin{proof}
(i) Without any loss of generality, assume that $i=1$ and let $\hat{G}_1=\prod_{j\not=1}G_j$. By \cite[Theorem 5.6]{bader2022charmenability}, we have $\cC_1(\cN)\cong \mathrm{Ind}_\Gamma^G(\cN)^{\hat{G}_1}=(L^\infty(G/\Gamma)\overline{\otimes} \cN)^{\hat{G}_1}$. Since $N$ is already a $G$-von Neumann algebra, the induced $G$-action on $L^\infty(G/\Gamma)\overline{\otimes} \cN$ is just the diagonal $G$-action. Since $\Gamma<G$ is irreducible, following the proof of \cite[Lemma 5.1]{Pe14}, $G_j\curvearrowright (G/\Gamma,m_\Gamma)$ is ergodic for each $j$, where $m_\Gamma$ is the unique $G$-invariant probability measure on $G/\Gamma$. Moreover, since each $G_j$ has the Howe-Moore property, $G_j\curvearrowright (G/\Gamma,m_\Gamma)$ is mixing, hence weakly mixing, which is equivalent to being metrically ergodic in the case of pmp action. Hence by \cite[Lemma 2.4]{amrutam2025non}, $G_j\curvearrowright L^\infty(G/\Gamma)\overline{\otimes} \cN$ is ergodic for each $j$. And we have
$$\cC_1(\cN)\cong (L^\infty(G/\Gamma)\overline{\otimes} \cN)^{\hat{G}_1}\subset(L^\infty(G/\Gamma)\overline{\otimes} \cN)^{{G}_2}=\C.$$

(ii) By considering the induced $G$-actions, we have the following inclusions of $G$-von Neumann algebra
 $$L^\infty(G/\Gamma)\overline{\otimes} \cN\subset L^\infty(G/\Gamma)\overline{\otimes}\cM_0\subset (L^\infty(G/\Gamma)\overline{\otimes} \cN)\overline{\otimes}L^\infty(B,\nu_B).$$
 As in the discussion above, we know that $L^\infty(G/\Gamma,m_\Gamma)\overline{\otimes} (\cN,\tau)$ is a trace-preserving $G$-von Neumann algebra that is $G_i$-ergodic for each $i$. Hence by \cite[Theorem 1.1]{amrutam2025non}, there exists a $(G,\mu)$-boundary $(C,\nu_C)$ such that
 \begin{equation}\label{induced inclusion}
L^\infty(G/\Gamma)\overline{\otimes}\cM_0=(L^\infty(G/\Gamma)\overline{\otimes} \cN)\overline{\otimes}L^\infty(C,\nu_C).
 \end{equation}
 By considering the ucp map $m_\Gamma\otimes \id: L^\infty(G/\Gamma)\overline{\otimes}( \cN\overline{\otimes}L^\infty(B,\nu_B))\to \cN\overline{\otimes}L^\infty(B,\nu_B)$ acting on both sides of (\ref{induced inclusion}), we get
     $$\cM_0=\cN\overline{\otimes}L^\infty(C,\nu_C).$$
Note that above, when we write $m_{\Gamma}$, we identify the measure with the state obtained by integrating with respect to $m_{\Gamma}$.     
\end{proof}
\subsection{Intermediate Subalgebras as crossed products}
In this subsection, we let $\Lambda$ be a countable discrete group, $(B,\nu_B)$ be a nonsingular $\Lambda$-space, and $(\cN,\tau)$ be a trace-preserving $\Lambda$-von Neumann algebra. We denote by $\mathbb{E}$, the canonical conditional expectation onto $L^{\infty}(B,\nu_B)\overline{\otimes}\mathcal{N}$ from $\left(L^{\infty}(B,\nu_B)\overline{\otimes}\mathcal{N}\right)\rtimes\Lambda$.

We begin with the following proposition, which is essentially the same as in the proof of \cite[Theorem~3.6]{suzuki}. 
\begin{proposition}
\label{prop:essentiallyfree}
Consider a $\Lambda$-von Neumann subalgebra $\mathcal{M}$ of $(L^\infty(B, \nu_B) \overline{\otimes} \mathcal{N}) \rtimes \Lambda$. Assume that $\mathcal{M}$ contains $L^{\infty}(Y,\eta)$ where $(Y,\eta)$ is an essentially free factor of $(B,\nu_B)$. Then, $\mathbb{E}(\mathcal{M})\subset\mathcal{M}$.    
\end{proposition}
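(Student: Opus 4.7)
The plan is to follow Suzuki's argument for \cite[Theorem~3.6]{suzuki}: construct a net of normal unital completely positive (ucp) maps on $(L^\infty(B,\nu_B)\overline{\otimes}\mathcal{N})\rtimes\Lambda$ that preserve $\mathcal{M}$ and converge, in a suitable weak topology, to $\mathbb{E}$. Since $\mathcal{M}$ is ultraweakly closed, this will force $\mathbb{E}(\mathcal{M})\subset\mathcal{M}$.

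The computational input is the following. Because $L^\infty(B)$ is abelian, the image of $L^\infty(Y)$ via $f\mapsto f\otimes 1$ lies in the center of $L^\infty(B)\overline{\otimes}\mathcal{N}$, and hence commutes with every Fourier coefficient; combined with the covariance relation $u_\gamma f u_\gamma^{*}=\sigma_\gamma(f)$, this yields
\[
f\,(a u_\gamma)\,f \;=\; (f\,\sigma_\gamma(f))\,a\,u_\gamma \qquad (f\in L^\infty(Y),\ a\in L^\infty(B)\overline{\otimes}\mathcal{N},\ \gamma\in\Lambda).
\]
For any partition of unity $\pi=\{p_1,\dots,p_n\}$ by projections in $L^\infty(Y)$, the map $T_\pi(x):=\sum_j p_j x p_j$ is then normal and ucp, sends $\mathcal{M}$ into itself (since each $p_j\in\mathcal{M}$), and multiplies the $\gamma$-th Fourier coefficient of $x$ by $c_\gamma(\pi):=\sum_j p_j\,\sigma_\gamma(p_j)\in L^\infty(Y)$; note $c_e(\pi)=1$, so $T_\pi$ fixes the range of $\mathbb{E}$.

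The essential freeness of $\Lambda\curvearrowright(Y,\eta)$ now supplies the approximation. Since $\{y\in Y:\sigma_\gamma(y)=y\}$ is $\eta$-null for each $\gamma\ne e$, a standard measurable refinement argument produces, for any finite $F\subset\Lambda\setminus\{e\}$ and any $\epsilon>0$, a partition $\pi$ with $\int_Y c_\gamma(\pi)\,\d\eta<\epsilon$ for every $\gamma\in F$. Refining along such pairs $(F,\epsilon)$ drives $c_\gamma(\pi)\to 0$ weakly in $L^\infty(Y,\eta)$ for every $\gamma\ne e$, so that for $x\in\mathcal{M}$ with finite Fourier support, $T_\pi(x)\to\mathbb{E}(x)$ in the $\|\cdot\|_{2,\tilde\varphi}$-norm associated to the faithful normal state $\tilde\varphi:=\varphi\circ\mathbb{E}$, where $\varphi$ is any faithful normal state on $L^\infty(B)\overline{\otimes}\mathcal{N}$. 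A standard $\epsilon/3$ approximation, together with the Kadison--Schwarz contractivity of $T_\pi$ on $L^2(\tilde\varphi)$, extends this convergence to arbitrary $x\in\mathcal{M}$, and ultraweak closedness of $\mathcal{M}$ concludes.

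The principal technical obstacle is the last step: the crossed product is not tracial in general (as $\nu_B$ is merely quasi-invariant), so one cannot use a trace-based $L^2$-structure and must instead work in the modular GNS space of $\tilde\varphi$, verifying that finite-Fourier-support elements are dense and that $T_\pi$ is contractive there. This is precisely the setting in which Suzuki's original argument operates, and it adapts essentially verbatim once the centrality of $L^\infty(Y)$ inside $L^\infty(B)\overline{\otimes}\mathcal{N}$ is observed.
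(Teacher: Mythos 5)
Your proposal is correct and follows essentially the same route as the paper: both run Suzuki's argument of compressing by a partition of unity in $L^\infty(Y,\eta)$, using essential freeness to kill the nontrivial Fourier coefficients and then passing to the limit inside the ultraweakly closed algebra $\mathcal{M}$. The only (cosmetic) difference is that the paper invokes the exact paving from Suzuki's proof, i.e.\ projections with $p_i(s_jp_i)=0$ so that the compression of a finite Fourier sum equals $\mathbb{E}(a)$ on the nose, whereas you settle for $\int_Y c_\gamma(\pi)\,\d\eta<\epsilon$ and a weak-convergence argument; both work, and your explicit handling of the non-tracial $L^2(\tilde\varphi)$-contractivity is the point the paper leaves as "a standard approximation argument."
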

\begin{proof}
Let $a\in\mathcal{M}$ and $\epsilon>0$ be given. We can find $f_1,f_2,\ldots,f_k\in L^{\infty}(B,\nu_B)$, $n_1,n_2,\ldots,n_k\in\mathcal{N}$, and $s_1,s_2,\ldots,s_k\in\Lambda\setminus\{e\}$ such that $$a\approx_{\epsilon}\sum_{i=1}^k(f_i\otimes n_i)\lambda(s_i)+\mathbb{E}(a).$$
By our assumption, the action $\Lambda \curvearrowright (Y,\eta)$ is essentially free. From the proof of \cite[Theorem~3.6]{suzuki}, for each $a\in L^{\infty}(B,\nu_B)\rtimes_{\text{alg}}\Lambda$, we can find orthogonal projections $p_1,p_2,\ldots,p_n\in L^{\infty}(Y,\eta)$ such that $\sum_{i=1}^{n}p_i=1$ and $p_i(s_jp_i)=0$ for all $i=1,2,\ldots,n$, and $j=1,2,\ldots,k$. We now observe that
\begin{align*}
&\sum_{j=1}^n (p_j\otimes 1_{\mathcal{M}})\left(\sum_{i=1}^k(f_i\otimes n_i)\lambda(s_i)+\mathbb{E}(a)\right)(p_j\otimes 1_{\mathcal{M}})\\&=\sum_{j=1}^n\sum_{i=1}^k(p_jf_i\otimes n_i)(s_ip_j\otimes 1_{\mathcal{M}})\lambda(s)+\mathbb{E}(a)\\&= \sum_{j=1}^n\sum_{i=1}^k(p_j(s_ip_j)f_i\otimes n_i)\lambda(s)+\mathbb{E}(a)\\&=\mathbb{E}(a).  
\end{align*}
Now, a standard approximation argument using the triangle inequality yields that $\sum_{j=1}^np_jap_j$ approximates $\mathbb{E}(a)$. Since $L^{\infty}(Y,\eta)\subset\mathcal{M}$, it follows that $\sum_{j=1}^np_jap_j\in\mathcal{M}$. Consequently, $\mathbb{E}(a)\in\mathcal{M}$ for all $a\in\mathcal{M}$. The claim follows.  
\end{proof}
We isolate the following structural result about invariant subalgebras $\mathcal{M}$ of the ambient crossed product, which allows us to conclude the position of $\mathcal{M}$ when it is an invariant subalgebra. Let $\mathbb{E}_{\tau}:L^{\infty}(B,\nu_B)\overline{\otimes}\mathcal{N}\to L^{\infty}(B,\nu_B)$ denotes the slice map (which is also a $\Lambda$-equivariant conditional expectation) defined by sending $f\otimes n\to f\tau(n)$.
\begin{proposition}
\label{prop:insideN}
Let $\mathcal{M}$ be a $\Lambda$-invariant subalgebra of  $\left(L^{\infty}(B,\nu_B)\overline{\otimes}\mathcal{N}\right)\rtimes\Lambda$ that contains $\mathcal{N}$. Suppose that $\Phi|_{\mathcal{M}}:=\mathbb{E}_{\tau}\circ\mathbb{E}\big|_{\mathcal{M}}$ satisfies $\Phi(\cM)=\mathbb{C}$. Then, $\mathbb{E}(\cM)\subset\cN$.   
\end{proposition}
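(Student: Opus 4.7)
The plan is to fix $a \in \mathcal{M}$, write $F := \mathbb{E}(a)$ as an essentially bounded weak${}^*$-measurable function $F : B \to \mathcal{N}$ under the natural identification $L^\infty(B,\nu_B)\overline{\otimes}\mathcal{N} \cong L^\infty(B,\nu_B;\mathcal{N})$, and prove that $F$ agrees $\nu_B$-almost everywhere with a single element of $\mathcal{N}$; this is exactly the conclusion $\mathbb{E}(a)\in 1\otimes\mathcal{N}$, i.e.\ $\mathbb{E}(\mathcal{M})\subset\mathcal{N}$.

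The first input I would exploit is that $\mathcal{N}\subset\mathcal{M}$ and that $\mathcal{M}$ is a subalgebra, so $na\in\mathcal{M}$ for every $n\in\mathcal{N}$. Using the $(L^\infty(B,\nu_B)\overline{\otimes}\mathcal{N})$-bimodularity of the canonical expectation $\mathbb{E}$, the hypothesis $\Phi(na)\in\mathbb{C}$ reads
\[
\mathbb{E}_\tau\bigl((1\otimes n)\,F\bigr) \;=\; \bigl(b\mapsto \tau(n\,F(b))\bigr) \;\in\; \mathbb{C}\cdot 1.
\]
So for every $n\in\mathcal{N}$ there is a scalar $c_n$ such that $\tau(n\,F(b))=c_n$ for $\nu_B$-a.e.\ $b$.

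Next I would identify a single candidate value. The estimate $|c_n|\le\|n\|_1\|F\|_\infty$ (with $\|\cdot\|_1$ the $L^1(\mathcal{N},\tau)$-norm) shows that $n\mapsto c_n$ is continuous on $\mathcal{N}$ in the $L^1$-norm, hence extends to a bounded linear functional on $\mathcal{N}_*\cong L^1(\mathcal{N},\tau)$; by $\mathcal{N}_*^*=\mathcal{N}$ it is represented by a unique $F_0\in\mathcal{N}$ with $c_n=\tau(nF_0)$ for every $n$. All that then remains is to upgrade the identity $\tau(n\,F(b))=\tau(n\,F_0)$---which a priori holds only on an $n$-dependent conull set---to a single conull set on which it is valid for every $n\in\mathcal{N}$. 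For this step I would invoke the standing separable-predual assumption: choose a countable sequence $\{n_k\}\subset\mathcal{N}$ such that $\{\tau(n_k\cdot)\}$ is norm-dense in $\mathcal{N}_*$, intersect the corresponding conull sets, and use that a bounded element of $\mathcal{N}$ annihilated by a norm-dense subset of $\mathcal{N}_*$ must vanish.

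The only real technical subtlety is this last passage across the $n$-dependent null sets; once it is handled by separability of the predual, one obtains $F(b)=F_0$ for $\nu_B$-a.e.\ $b$, so $\mathbb{E}(a)=1\otimes F_0 \in \mathcal{N}$ as required. I note in passing that the $\Lambda$-invariance of $\mathcal{M}$ is not needed for this argument; only the inclusion $\mathcal{N}\subset\mathcal{M}$ and the multiplicative structure of $\mathcal{M}$, together with the bimodularity of $\mathbb{E}$ and the hypothesis $\Phi(\mathcal{M})=\mathbb{C}$, are used.
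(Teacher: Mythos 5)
Your proposal is correct and follows essentially the same route as the paper's own proof: multiply $a$ on the left by elements of $\mathcal{N}\subset\mathcal{M}$, use $\Phi(na)\in\mathbb{C}$ to get $\tau(nF(b))$ constant a.e., and pass to a single conull set via a countable weakly dense (equivalently, predual-norm-dense) family, concluding that $F$ is a.e.\ constant. The only cosmetic difference is that you produce the constant value $F_0$ explicitly through the duality $\mathcal{N}_*^*=\mathcal{N}$, whereas the paper simply observes that $x_{b_1}=x_{b_2}$ for all $b_1,b_2$ in the common conull set; your closing remark that $\Lambda$-invariance is not needed here is also accurate.
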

\begin{proof}
Fix $x\in \cM$, and assume that $\mathbb{E}(x)=\int_B^\oplus x_b\in L^\infty(B,\cN)\cong L^\infty(B)\overline{\otimes} \cN $. Take a weakly dense sequence $(y_n)\subset (\cN)_1$. Since $\cN\subset \cM$, we have $y_nx\in \cM$ for each $n$. Since $\Phi(y_nx)=\int_B^\oplus \tau(y_nx_b)\in \C$, there exists $c_n\in \C$ and a co-null subset $E_n\subset B$ such that $\tau(y_nx_b)=c_n$ for every $b\in E_n$. Let $E=\cap_n E_n\subset B$. Then $E$ is a co-null subset of $B$. Moreover, for any $b_1,b_2\in E$, for any $n\geq 1$, $\tau(y_nx_{b_1})=\tau(y_n x_{b_2})=c_n$. Since $(y_n)\subset (\cN)_1$ is weakly dense, we must have $x_{b_1}=x_{b_2}$. Therefore, there exists $x_0\in \cN$ such that $x_b=x_0$ for any $b\in E$. Moreover, since $E$ is a co-null subset of $B$. We have 
$$\mathbb{E}(x)=\int_B^\oplus x_b=1\otimes x_0\in \cN.$$
Therefore, by the arbitrariness of $x\in\cM$, we have $\mathbb{E}(\cM)\subset\cN$.
\end{proof}
\subsection{Tensor products and IFTs}
Let $k$ be a local field. Let $\mathbf G$ be a connected simply-connected semisimple algebraic $k$-group without $k$-anisotropic almost $k$-simple subgroups, and $\rk_k(\mathbf G) \geq 2$. Set $G = \mathbf G(k)$. Let $\mathbf P < \mathbf G$ be a minimal parabolic $k$-subgroup and set $P = \mathbf P(k)$. For a $G$-von Neumann algebra $M$, we denote by $\mathrm{Sub} (M)$ the space of von Neumann subalgebras of $M$, endowed with the \textbf{Effros-Mar\'{e}chal topology }\cite{Eff65,Mar73}. Recall that an \textbf{invariant random subalgebra (IRA)} is a $G$-invariant Borel probability measure $\nu\in\Prob(\mathrm{Sub} (M))$~\cite{amrutam2025amenable, zhou2024noncommutative}.
\begin{lemma}[Ergodic IRA on a product]
\label{lem:bundle}
Let \(\nu\) be an IRA on
\(\mathrm{Sub}\big(L^\infty(G/P,\nu_P)\big)\) that is $G$-irreducible, i.e., $\nu$ is $H$-ergodic for every non-central normal subgroup $H<G$.
Then there exists a \(G\)-invariant von Neumann subalgebra
\(L^\infty(G/Q,\nu_Q)\subset L^\infty(G/P,\nu_P)\) such that
\[
\nu=\delta_{L^\infty(G/Q,\nu_Q)}.
\]
\end{lemma}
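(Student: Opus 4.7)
The plan is to encode the random subalgebra $A$ as a single $G$-invariant intermediate von Neumann algebra inside $L^\infty(G/P,\nu_P)\,\overline{\otimes}\,\mathcal{N}$ for an appropriate tracial coefficient $\mathcal{N}$, and then to apply the non-commutative Bader--Shalom intermediate factor theorem to force this intermediate algebra to split as a tensor product; the splitting will collapse $\nu$ to a Dirac mass. Concretely, I would set $\mathcal{N}:=L^\infty(\mathrm{Sub}(L^\infty(G/P,\nu_P)),\nu)$, endowed with the trace-preserving $G$-action induced by $A\mapsto g\cdot A$ (trace preservation coming from $G$-invariance of $\nu$), and form the tautological direct integral
$$
\mathcal{B}\;:=\;\int_{A}^{\oplus} A\,\d\nu(A)\;\subset\; L^\infty(G/P,\nu_P)\,\overline{\otimes}\,\mathcal{N},
$$
whose elements are measurable fields $(f_A)_A$ with $f_A\in A$ for $\nu$-a.e.\ $A$. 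A routine check, using that $G$ permutes the subalgebras $A$ and that $\nu$ is $G$-invariant, shows that $\mathcal{B}$ is a $G$-invariant von Neumann subalgebra satisfying $\mathcal{N}\subset\mathcal{B}\subset L^\infty(G/P,\nu_P)\,\overline{\otimes}\,\mathcal{N}$ (the left inclusion uses that $1\in A$ for every subalgebra).

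Next I would invoke \cite[Theorem 1.1]{amrutam2025non}, exactly as in the proof of Theorem~\ref{Ni=C}(ii). Writing $G=\prod_{i=1}^{d}G_{i}$ as a product of its almost $k$-simple factors, the required $G_i$-ergodicity of $\mathcal{N}$ for each $i$ follows directly from the $G$-irreducibility of $\nu$, since each $G_i$ is a non-central normal subgroup of $G$. The cited theorem then produces a $(G,\mu)$-boundary $(C,\nu_C)$ with
$$
\mathcal{B}\;=\;L^\infty(C,\nu_C)\,\overline{\otimes}\,\mathcal{N},
$$
and Furstenberg's classification of $\mu$-boundaries for semisimple $G$ identifies $(C,\nu_C)$ with $(G/Q,\nu_Q)$ for some intermediate parabolic $P<Q<G$.

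Finally, comparing the two expressions for $\mathcal{B}$,
$$
\int_{A}^{\oplus}A\,\d\nu(A)\;=\;L^\infty(G/Q,\nu_Q)\,\overline{\otimes}\,\mathcal{N}\;=\;\int_{A}^{\oplus}L^\infty(G/Q,\nu_Q)\,\d\nu(A),
$$
as subalgebras of the fixed ambient algebra $L^\infty(G/P,\nu_P)\,\overline{\otimes}\,\mathcal{N}$, standard disintegration forces $A=L^\infty(G/Q,\nu_Q)$ for $\nu$-a.e.\ $A$, giving $\nu=\delta_{L^\infty(G/Q,\nu_Q)}$. The main technical obstacle I anticipate is the measurable-field bookkeeping: verifying on the Effros--Mar\'echal Borel space $\mathrm{Sub}(L^\infty(G/P,\nu_P))$ that the tautological assignment $A\mapsto A$ is a measurable field of von Neumann algebras (so that $\mathcal{B}$ is genuinely a well-defined $G$-invariant subalgebra), and checking that the final disintegration step really collapses the random algebra. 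Once these measurability details are settled, the theorem is essentially a direct consequence of \cite[Theorem 1.1]{amrutam2025non} together with the parabolic classification of $\mu$-boundaries.
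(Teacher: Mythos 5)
Your construction of the tautological algebra $\mathcal{B}=\int_A^\oplus A\,\d\nu(A)$ inside $L^\infty(G/P,\nu_P)\,\overline{\otimes}\,L^\infty(E,\nu)$ (with $E=\mathrm{Sub}(L^\infty(G/P,\nu_P))$) is exactly the paper's section algebra $S^\infty(E[\nu])$, and your final collapsing step matches the paper's. The divergence, and the genuine gap, is in the intermediate factor theorem you invoke. You cite \cite[Theorem~1.1]{amrutam2025non}, the non-commutative Bader--Shalom theorem, whose applicability you justify by writing $G=\prod_{i=1}^d G_i$ as a product of its almost $k$-simple factors and noting that each $G_i$ acts ergodically. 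But Bader--Shalom-type theorems require $G$ to be a genuine product of at least two factors, each acting ergodically; in the setting of this lemma $\mathbf G$ is a semisimple $k$-group over a \emph{single} local field and may well be almost $k$-simple (e.g.\ $\mathbf G=\mathrm{SL}_3$), in which case $d=1$ and there is no product structure to exploit. This is not a peripheral case: Lemma~\ref{lem:bundle} exists precisely to feed into Theorem~\ref{thm:nciftfield}, the single-factor non-commutative Stuck--Nevo--Zimmer theorem, which the paper needs exactly when the main theorem's $d$ equals $1$. The paper's own logic confirms the division of labour: \cite[Theorem~1.1]{amrutam2025non} is only ever invoked under a product hypothesis with $d\ge 2$ (Theorem~\ref{Ni=C}(ii)), while the single-field case is routed through this lemma.

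The fix is the one the paper uses: since $L^\infty(E,\nu)$ is commutative, the whole inclusion $L^\infty(E,\nu)\subseteq \mathcal{B}\subseteq L^\infty(E,\nu)\,\overline{\otimes}\,L^\infty(G/P,\nu_P)$ is an inclusion of commutative algebras, i.e.\ an intermediate factor of the $G$-space $E\times G/P$ over the irreducible p.m.p.\ space $(E,\nu)$. One can therefore apply the \emph{commutative} intermediate factor theorem of Levit \cite[Theorem~1.1]{levit2017nevo} (the Nevo--Stuck--Zimmer IFT for higher-rank semisimple groups over a local field, which covers the almost $k$-simple case), obtaining $\mathcal{B}=L^\infty(E,\nu)\,\overline{\otimes}\,L^\infty(G/Q,\nu_Q)$ directly, after which your concluding disintegration argument goes through. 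Your measurability concerns about the field $A\mapsto A$ are legitimate but standard, and the two-sided density argument you sketch for collapsing $\nu$ to a Dirac mass is the same as the paper's.
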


\begin{proof} We denote by $E$ the collection $\mathrm{Sub}\big(L^\infty(G/P,\nu_P)\big)$.
Let us consider the following collection of section spaces.
\[
\mathcal U^\infty(E[\nu])\;:=\;\{\,s:E\to L^\infty(G/P,\nu_P):s \text{ is $\nu$-measurable}\,\},
\]
\[
S^\infty(E[\nu])\;:=\;\{\,s\in \mathcal U^\infty(E[\nu])\ :\ s(A)\in A
\ \text{for }\nu\text{-a.e. }A\in E\}.
\]
Then, we claim that
\[
L^\infty(E,\nu)\ \subseteq\ S^\infty(E[\nu])\ \subseteq\
L^\infty(E,\nu)\ \bar\otimes\ L^\infty(G/P,\nu_P).
\]
Indeed, the first inclusion holds because we can identity $f\in L^\infty(E,\nu)$ with $s_f: E\to L^\infty(G/P,\nu_P)$ defined by \[
s_f(A)=f(A)\cdot \mathbf 1_{L^{\infty}(G/P,\nu_P)},\] which lands in \(S^\infty(E[\nu])\).
For the second, identify $$\mathcal U^\infty(E[\nu])\cong L^\infty(E\times G/P,\nu\otimes \nu_P)
\cong L^\infty(E,\nu)\bar\otimes L^\infty(G/P,\nu_P).$$
    Now, using \cite[Theorem~1.1]{levit2017nevo}, there exists a $G$-invariant subalgebra $L^\infty(G/Q,\nu_Q)\subset L^{\infty}(G/P,\nu_P)$ such that
\[
S^\infty(E[\nu])\;=\;L^\infty(E,\nu)\ \bar\otimes\ L^\infty(G/Q,\nu_Q).
\]
Hence $\nu$-a.e. $A\in E$ is a subalgebra of $L^\infty(G/Q,\nu_Q)$. Also, for any $F\in L^\infty(G/Q,\nu_Q)$, the constant map $A\in E\mapsto F$ is contained in $S^\infty$, i.e., there exists a $\nu$-conull subset $E_F\subset E$ such that $F\in A$ for any $A\in E_F$. Take a weakly dense sequence $(F_n)\subset L^\infty(G/Q,\nu_Q)$ and let $E_0=\cap_n E_n$. Then $E_0$ is still a $\nu$-conull subset of $E$ and for any $A\in E_0$, $L^\infty(G/Q,\nu_Q)=\{ F_n\}''\subset A$. Therefore, $\nu=\delta_{L^\infty(G/Q,\nu_Q)}$. This completes the proof.
\end{proof}
With the structural understanding of $ G$-invariant tensor products established, we now turn our attention to $\Gamma$-invariant subalgebras within the crossed product. A central component of the rigidity argument is determining when an intermediate subalgebra contains a copy of a geometric boundary. This result acts as a non-commutative analogue of the Nevo-Stuck-Zimmer intermediate factor theorem. This generalizes \cite[Theorem~1.1]{levit2017nevo}.
\begin{thm}
\label{thm:nciftfield}
Let $(N, \tau)$ be a trace-preserving $G$-von Neumann algebra that is $G$-irreducible, i.e., $N$ is $H$-ergodic for every non-central normal subgroup $H<G$. Consider a $G$-invariant von Neumann subalgebra
\[
(N, \tau) \subseteq (M, \tau) \subseteq (N, \tau) \ \bar{\otimes} \ L^\infty(G / P, \nu_{P}),
\]
then
\[
(M, \tau) = (N, \tau) \ \bar{\otimes} \ L^\infty(G / Q, \nu_{Q})
\]
for some $G$-invariant subalgebra 
$L^\infty(G / Q, \nu_{Q}) \subseteq L^\infty(G / P, \nu_{P})$.
\end{thm}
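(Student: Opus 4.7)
The plan is to pass to the central decomposition of $N$, build a $G$-irreducible invariant random subalgebra on $L^\infty(G/P,\nu_P)$ from the fiberwise relative commutants of $N$ inside $M$, apply Lemma \ref{lem:bundle}, and then invoke the classical tensor-splitting theorem for intermediate subalgebras of the tensor product of a factor with an arbitrary von Neumann algebra.

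\textbf{Central disintegration.} Identify $Z(N) = L^\infty(Y,m)$; because the $G$-action on $N$ is trace-preserving, $m$ is $G$-invariant on $Y$. Since $Z(N)\otimes 1$ lies in $Z(N\bar\otimes L^\infty(G/P,\nu_P))$ and in $N\subset M$, it is contained in $Z(M)$. The whole chain therefore disintegrates over $Y$ into fiberwise inclusions
\[
N_y\subset M_y\subset N_y\bar\otimes L^\infty(G/P,\nu_P),
\]
with $N_y$ a finite factor for $m$-a.e.\ $y$.

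\textbf{Building the IRA.} For a.e.\ $y$ define $B_y:=N_y'\cap M_y$. Since $N_y$ is a factor, $N_y'\cap (N_y\bar\otimes L^\infty(G/P,\nu_P)) = 1\otimes L^\infty(G/P,\nu_P)$, so $B_y$ is canonically a von Neumann subalgebra of $L^\infty(G/P,\nu_P)$; moreover $y\mapsto B_y$ is Borel, arising as the central disintegration of $N'\cap M\subset L^\infty(Y\times G/P,m\otimes \nu_P)$. Let $\nu$ be the pushforward of $m$ under $y\mapsto B_y$, a Borel probability measure on $\mathrm{Sub}(L^\infty(G/P,\nu_P))$. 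The diagonal $G$-action intertwines the disintegrations, so $B_{g\cdot y}=g\cdot B_y$ for all $g\in G$; combined with $G$-invariance of $m$ this yields $G$-invariance of $\nu$. For any non-central normal $H<G$, the hypothesis $N^H=\C$ forces $Z(N)^H=L^\infty(Y)^H=\C$, so $H$ acts ergodically on $(Y,m)$ and hence $\nu$ is $H$-ergodic. Thus $\nu$ is a $G$-irreducible IRA, and Lemma \ref{lem:bundle} yields $\nu=\delta_{L^\infty(G/Q,\nu_Q)}$ for some parabolic $P<Q<G$, i.e.\ $B_y=L^\infty(G/Q,\nu_Q)$ for $m$-a.e.\ $y$.

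\textbf{Reassembling $M$.} For a.e.\ $y$, the factor inclusion $N_y\otimes 1\subset M_y\subset N_y\bar\otimes L^\infty(G/P,\nu_P)$ with relative commutant $N_y'\cap M_y=L^\infty(G/Q,\nu_Q)$ allows one to invoke the classical tensor-splitting theorem for intermediate subalgebras of a tensor product over a factor (Ge--Kadison, together with subsequent refinements), giving $M_y=N_y\bar\otimes(N_y'\cap M_y)=N_y\bar\otimes L^\infty(G/Q,\nu_Q)$. Integrating over $Y$,
\[
M=\int_Y^\oplus M_y\,dm(y)=\int_Y^\oplus N_y\bar\otimes L^\infty(G/Q,\nu_Q)\,dm(y)=N\bar\otimes L^\infty(G/Q,\nu_Q),
\]
as desired. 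I expect the main technical obstacles to be verifying Borel measurability of $y\mapsto B_y$ in the Effros--Mar\'echal topology so that $\nu$ is a bona fide IRA, and correctly invoking the factor tensor-splitting result fiberwise to pass from the relative commutants $B_y$ back to $M_y$; by contrast, $G$-invariance and $H$-ergodicity of $\nu$ follow cleanly from trace-preservation and the $G$-irreducibility hypothesis on $N$.
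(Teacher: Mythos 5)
Your proposal is correct and follows essentially the same route as the paper: disintegrate over $Z(N)$, split each fiber via Ge--Kadison, push the fiberwise subalgebras of $L^\infty(G/P,\nu_P)$ forward to a $G$-irreducible IRA, and apply Lemma~\ref{lem:bundle}. The only cosmetic differences are that you define the fiber algebras as the relative commutants $N_y'\cap M_y$ (which lets you absorb the degenerate case $N_y=\C$ that the paper treats separately via \cite[Theorem~1.1]{levit2017nevo}) and that you establish measurability of the fiber assignment through the disintegration of $N'\cap M$ rather than through conditional expectations and \cite[Remark~2.11]{HW98} as in the paper.
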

\begin{proof}
We use the same techniques as above. The proof is divided into two steps. Let
\[
Z(N, \tau) = L^\infty(X, \mu).
\]
By our assumption, we see that $(X,\mu)$ is an irreducible $G$-space in the sense that every non-central normal subgroup acts ergodically on $(X,\mu)$. Write
\[
 M  = \int_X^\oplus  M _x \, \d\mu(x), 
\quad  N  = \int_X^\oplus  N _x \, \d\mu(x).
\]
Let 
\[
Y = \{x \in X \mid  N _x = \mathbb{C}\}, \quad Z = \{x \in X \mid  N _x \neq \mathbb{C}\}.
\]
By ergodicity, either $\mu(Y) = 0$ or $1$. Suppose that $\mu(Y) = 0$. Then for $\mu$-a.e.~$x \in X$, we have $N_x \neq \mathbb{C}$. In this case, we have by \cite[Theorem 4 (ii) in 3.3 of Chapter 3 in Part II]{Dixmier_book} that
\[
 N _x \subseteq  M _x \subseteq  N _x \ \bar{\otimes} \ L^\infty(G / P, \nu_{P}).
\]
Since $ N _x$ is a factor, using the Ge-Kadison splitting result~\cite{ge1996tensor}, we see that
\[
 M_x =  N_x \ \bar{\otimes} \ \CA_x,~\CA_x \subseteq L^\infty(G / P, \nu_{P}).
\]
Consider the \text{Subalg} map defined by
\[
\text{Subalg}:X \longrightarrow \text{Subalg}\left(L^\infty(G / P, \nu_{P})\right),
~x \longmapsto \CA_x.
\]
\begin{claim}
\label{claim:first}
$\text{Subalg}$ is $G$-equivariant and measurable.  
\end{claim}
\begin{proof}[Proof of Claim~\ref{claim:first}] 
Note that $G$-equivariance follows from the fact that both $M$ and $N$ are $G$-invariant and hence $gM_xg^{-1}=M_{gx}$ and $gN_xg^{-1}=N_{gx}$ hold for $\mu$-a.e. $x\in X$. 

Now, all we need to show is that the map $\text{Subalg}$ is measurable. Fix any subalgebra $\CA \subseteq L^\infty(G / P, \nu_{P})$, any finite subset $F \subset L^{\infty}(G/P,\nu_{P})$, and $\varepsilon>0$. Denote by $E_\CA: L^\infty(G / P, \nu_{P}) \to \CA$ the canonical trace-preserving conditional expectation.
Following \cite[Remark 2.11]{HW98}, we only need to show that
\[
\left\{ x\in X \mid \sup_{f\in F} \| E_\CA(f)-E_{\mathcal{A}_x}(f)\|_2^2 < \varepsilon \right\}
\text{ is $\mu$-measurable.}
\]
It suffices to check that for all $\xi,\eta\in L^2\left(G / P, \nu_{P}\right)$,  
the map
\[
x \mapsto \langle E_{ \mathcal{A}_x}(f)\xi,\eta\rangle
\]
is $\mu$-measurable. Note that
\[
E_{ M }\left(1\otimes f \right)
= \int_X^\oplus E_{ M _x}(f) \, \d \mu(x)=\int_X^{\oplus}1\otimes E_{\mathcal{A}_x}(f)d\mu(x).
\]
Here, $E_M: N\bar\otimes L^{\infty}(G/P,\nu_{P})\rightarrow M$, $E_{M_x}: N_x\bar{\otimes}L^{\infty}(G/P,\nu_{P})\rightarrow M_x$ and $E_{\mathcal{A}_x}: L^{\infty}(G/P,\nu_{P})\rightarrow \mathcal{A}_x$ are the conditonal expectations.
We see that \[
x \mapsto \langle E_{\mathcal{A}_x}(f)\xi,\eta\rangle
= \langle (E_ M (1\otimes f))_x (1\otimes\xi),1\otimes\eta\rangle,
\]
is $\mu$-measurable by indirect integral theory, say by \cite[Proposition 1 in 2.1 of Chapter 2 in Part II]{Dixmier_book}.
\end{proof}
Therefore
\[
\text{Subalg}_*\mu \in \text{IRA}(L^\infty(G / P), \nu_{P}).
\]
Using Lemma~\ref{lem:bundle}, we see that for some $G$-invariant von Neumann subalgebra
$L^\infty(G/Q, \nu_Q) \subseteq L^\infty(G / P, \nu_{P})$, 
\[\text{Subalg}_*\mu = \delta_{ L^\infty(G/Q, \nu_Q)}.
\]
Consequently, it follows that for $\mu$-a.e.~$x \in X$,
\[
\CA_x = L^\infty(G/Q, \nu_Q).
\]
Therefore, we have
\[
 M  = \int_X^\oplus  M _x \, \d\mu(x)
= \int_X^\oplus  N _x \ \bar{\otimes} \ L^\infty(G/Q, \nu_Q) \, \d\mu(x)
=  N  \ \bar{\otimes} \ L^\infty(G/Q, \nu_Q).
\]
Now, consider the case when $\mu(Y) = 1$. Then, in this case, we have
\[
 N  = \int_X^\oplus \mathbb{C} \, \d\mu(x) = L^\infty(X, \mu),
\]
and consequently, we are reduced to the case
\[
L^\infty(X, \mu) \subseteq  M  \subseteq L^\infty(X, \mu) \ \bar{\otimes} \ L^\infty(G / P, \nu_{P}),
\]
and $ M $ splits by \cite[Theorem 1.1]{levit2017nevo}.
\end{proof}
Let us now prove a version of the non-commutative Nevo-Zimmer theorem established in \cite{boutonnet2021stationary,bader2023charmenability} for a $k$-simple connected algebraic $k$-group with $\rk_k(\mathbf G) \geq 2$. The proof is immediate from \cite[Theorem 5.4]{bader2023charmenability}. We include the proof for the sake of completion. 
\begin{thm}
\label{thm:mainembeddingprt}
Let $k$ be a local field. Let $\mathbf G$ be a simply
connected $k$-isotropic almost $k$-simple linear algebraic $k$-group such that $\rk_k(\mathbf G) \geq 2$ and set $G = \mathbf G(k)$. Let $\mathbf P < \mathbf G$ be a minimal parabolic $k$-subgroup and set $P = \mathbf P(k)$. Let $\Gamma<G$ be a lattice and $(\cN,\tau)$ be a trace-preserving ergodic $G$-von Neumann algebra such that $Z(\Gamma)$ acts trivially on $\cN$. Let $\Lambda=\Gamma/Z(\Gamma)$ and $\Phi:=\mathbb{E}_{\tau}\circ\mathbb{E}: (L^\infty(G/P,\nu_P) \overline{\otimes} \mathcal{N}) \rtimes\Lambda \to L^\infty(G/P,\nu_P)$ be the canonical conditional expectation. Then for any $\Gamma$-invariant von Neumann algebra
$ \mathcal{M} \subset (L^\infty(G/P,\nu_P) \overline{\otimes} \mathcal{N}) \rtimes\Lambda
$, the following dichotomy holds:
\begin{itemize}
\item Either $\Phi$ is $\Gamma$-invariant, that is, $\Phi(\cM)=\mathbb{C}1$.
\item Or there exist a proper parabolic $k$-subgroup $\mathbf P < \mathbf Q < \mathbf G$ and a $\Gamma$-equivariant unital normal embedding $\iota : L^\infty(G/Q) \hookrightarrow \cM$ where $Q = \mathbf Q(k)$ such that $\mathbb{E} \circ \iota : L^\infty(G/Q) \hookrightarrow L^\infty(G/P)$ is the canonical unital normal embedding. In particular, $L^\infty(G/Q)\subset \cM\cap L^\infty(G/P)$.
\end{itemize}
\end{thm}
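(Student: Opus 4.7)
The plan is to deduce the dichotomy directly from the non-commutative Nevo--Zimmer theorem \cite[Theorem~5.4]{bader2023charmenability}, applied to the $\Gamma$-equivariant ucp map $\Phi|_{\cM}$. First I would verify that $\Phi$ is well defined and $\Gamma$-equivariant. Since $Z(\Gamma)\subset Z(G)$ lies inside every parabolic and hence acts trivially on $G/P$, and by assumption acts trivially on $\cN$, the whole $\Gamma$-action on $(L^\infty(G/P,\nu_P)\overline{\otimes}\cN)\rtimes\Lambda$ factors through $\Lambda=\Gamma/Z(\Gamma)$. The canonical conditional expectation $\mathbb{E}$ and the slice map $\mathbb{E}_\tau$ are $\Lambda$-equivariant, so $\Phi=\mathbb{E}_\tau\circ\mathbb{E}:\cM\to L^\infty(G/P,\nu_P)$ is a $\Gamma$-equivariant normal ucp map. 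Since $\mathbf G$ is $k$-simple with $\rk_k(\mathbf G)\geq 2$, applying \cite[Theorem~5.4]{bader2023charmenability} to the $\Gamma$-von Neumann algebra $\cM$ equipped with the boundary map $\Phi$ yields the dichotomy: either $\Phi(\cM)=\mathbb{C}1$, which is the first alternative, or there exist a proper parabolic $k$-subgroup $\mathbf P<\mathbf Q<\mathbf G$ and a $\Gamma$-equivariant unital normal embedding $\iota:L^\infty(G/Q)\hookrightarrow\cM$ such that $\Phi\circ\iota$ is the canonical inclusion $L^\infty(G/Q)\hookrightarrow L^\infty(G/P)$.

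The remaining step is to upgrade $\Phi\circ\iota=(\text{canonical})$ to the stronger statement $\mathbb{E}\circ\iota=(\text{canonical})$ required by the theorem. For any projection $f\in L^\infty(G/Q)$, $\mathbb{E}(\iota(f))$ is a positive contraction in $L^\infty(G/P)\overline{\otimes}\cN$ whose image under the trace-preserving slice map $\mathbb{E}_\tau$ is the projection $\Phi(\iota(f))\in L^\infty(G/P)$. Writing $\mathbb{E}(\iota(f))=\int^{\oplus}_{G/P}n_g\,d\nu_P(g)$ as a measurable field of contractions in $\cN$, the conditions $0\leq n_g\leq 1$ and $\tau(n_g)\in\{0,1\}$ combined with faithfulness of $\tau$ force $n_g\in\{0,1\}\cdot 1_{\cN}$, so $\mathbb{E}(\iota(f))\in L^\infty(G/P)\otimes 1$ agrees with the canonical image of $f$. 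Linear extension and weak continuity then give $\iota(L^\infty(G/Q))\subset\cM\cap L^\infty(G/P)$. Since the substantive rigidity is entirely absorbed by \cite[Theorem~5.4]{bader2023charmenability}, the only obstacle is the bookkeeping check that our hypotheses (trivial $Z(\Gamma)$-action, trace-preservation, and $G$-ergodicity of $\cN$) fit the exact formulation of the cited theorem; no new analytic ingredient is required.
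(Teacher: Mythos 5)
Your overall strategy coincides with the paper's: both proofs reduce the statement to \cite[Theorem~5.4]{bader2023charmenability} applied to the $\Gamma$-equivariant ucp map $\Phi|_{\cM}$. Your observation that the $\Gamma$-action factors through $\Lambda$ is correct, and your upgrading step from ``$\Phi\circ\iota$ canonical'' to ``$\mathbb{E}\circ\iota$ canonical'' (via $0\le n_g\le 1$, $\tau(n_g)\in\{0,1\}$ and faithfulness of $\tau$) is a valid argument that the paper in fact leaves implicit, so that part is a genuine plus.

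However, there is a genuine gap where you write that ``no new analytic ingredient is required'' beyond bookkeeping. The cited noncommutative Nevo--Zimmer theorem requires the $\Gamma$-von Neumann algebra it is applied to be \emph{ergodic} (and, in the form used here, that $\nu_P$ be stationary for a suitable fully supported measure on the lattice), and verifying this is the entire content of the paper's proof --- it is not automatic from the stated hypotheses. The algebra $\cN$ is only assumed to be $G$-ergodic, not $\Gamma$-ergodic; passing from $\cN^G=\C$ to $\cN^\Gamma=\C$ uses the induction $\mathrm{Ind}_\Gamma^G(\cN)\cong L^\infty(G/\Gamma)\overline{\otimes}\cN$ with the diagonal action, the Howe--Moore property of $G$ (so that $G\curvearrowright (G/\Gamma,m_\Gamma)$ is mixing, hence metrically ergodic), and \cite[Lemma~2.4]{amrutam2025non}. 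One then still has to show that the fixed points of the full ambient algebra $(L^\infty(G/P,\nu_P)\overline{\otimes}\cN)\rtimes\Lambda$ reduce to $\cN^\Gamma$: this uses Furstenberg's theorem producing a fully supported $\mu_\Gamma$ with $\nu_P$ $\mu_\Gamma$-stationary, metric ergodicity of the restricted lattice action on $(G/P,\nu_P)$ to kill the $L^\infty(G/P)$-part, and the ICC property of $\Lambda$ together with stationarity of $\nu_P\otimes\tau$ to kill the group part of the crossed product. Only after all of this does one know that $\Gamma\curvearrowright\cM$ is ergodic for an \emph{arbitrary} invariant subalgebra $\cM$, which is what licenses the application of \cite[Theorem~5.4]{bader2023charmenability}. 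As written, your proposal applies the theorem without its key hypothesis.
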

\begin{proof}
Since the Poisson boundary action $G\curvearrowright (G/P,\nu_P)$ is metrically ergodic and metrical ergodicity can be passed to the restricted actions of lattices \cite[Corollary 6.7]{BG17}, we have that $\Gamma$, and hence $\Lambda$, also act metrically ergodically on $(G/P,\nu_P)$. Following \cite[Theorem 4.1]{Mar91}, there exists a fully supported measure $\mu_\Gamma\in\Prob(\Gamma)$ such that $\nu_P$ is $\mu_\Gamma$-stationary. Let $\pi:\Gamma\to\Lambda$ be the quotient map and $\mu_\Lambda=\pi_*\mu_\Gamma$. Since $Z(\Gamma)$ acts trivially on $G/P$, we have that $\nu_P$ is also $\mu_\Lambda$-stationary. Following the same proof provided by R\'emi Boutonnet in \cite[Lemma 2.16]{kalantar2023invariant} and \cite[Lemma 2.4]{amrutam2025non}, since $\Lambda$ is ICC \cite[Lemma 2.6]{HI25} and $\nu_P\otimes \tau$ is $\mu_\Lambda$-stationary on $L^\infty(G/P,\nu_P) \overline{\otimes} \mathcal{N}$,  we have
$$((L^\infty(G/P,\nu_P) \overline{\otimes} \mathcal{N}) \rtimes\Lambda)^\Lambda=(L^\infty(G/P,\nu_P) \overline{\otimes} \mathcal{N})^\Lambda=\cN^\Lambda=\cN^\Gamma.$$
Following \cite[Subsection 2.1]{boutonnet2021stationary}, $\Gamma\curvearrowright \cN$ is ergodic iff the induced action $$G\curvearrowright \mathrm{Ind}_\Gamma^G(\cN)=L^\infty(G/\Gamma,m_\Gamma)\overline{\otimes}\cN$$ is ergodic. Note that $\cN$ is already a $G$-von Neumann algebra, hence the induced $G$-action on $L^\infty(G/\Gamma,m_\Gamma)\overline{\otimes}\cN$ is the diagonal action. Moreover, since $G$ has the Howe-Moore property and $G\curvearrowright (G/\Gamma,m_\Gamma)$ is ergodic, we have that $G\curvearrowright (G/\Gamma,m_\Gamma)$ is mixing and hence metrically ergodic. Moreover, since $\cN$ is $G$-ergodic, by \cite[Lemma 2.4]{amrutam2025non}, we have that $G\curvearrowright L^\infty(G/\Gamma,m_\Gamma)\overline{\otimes}\cN$ is ergodic, which is equivalent to $\cN^\Gamma=\C$. Hence $\Gamma\curvearrowright (L^\infty(G/P,\nu_P) \overline{\otimes} \mathcal{N}) \rtimes\Lambda$ is ergodic. In particular, $\Gamma\curvearrowright\cM$ is ergodic. By applying \cite[Theorem 5.4]{bader2023charmenability}, we finish the proof.
    \end{proof}
\subsection{Proof of Main results for algebraic groups}
\begin{proof}[Proof of Theorem~\ref{thm:invarince undertensor}]
Following \cite[the proof of Theorem 3.3 ]{boutonnet2023noncommutative}, we have $QZ(G_i)=Z(G_i)$ for $1\leq i \leq d$. Hence it follows from Theorem~\ref{thm:continuous elements are in 0 coefficient} that the von Neumann subalgebra $\mathcal{M}_i$ of $G_i$–continuous elements in $\left(L^\infty(G/P, \nu_{P})\overline{\otimes} \mathcal{N}\right) \rtimes\Lambda$ is exactly $\mathcal{C}_i\left(L^\infty(G_i/P_i, \nu_{P_i})\overline{\otimes} \mathcal{N}\right)$. It is a consequence of Theorem~ \ref{thm:continuouselements} that 
\[\mathcal{C}_i\left(L^\infty(G_i/P_i, \nu_{P_i})\overline{\otimes} \mathcal{N}\right)=L^\infty(G_i/P_i, \nu_{P_i})\overline{\otimes} \mathcal{C}_i(\mathcal{N}).\]
Finally, we obtain from Theorem~\ref{Ni=C} (i) that $\mathcal{C}_i(\mathcal{N})=\mathbb{C}$.
\end{proof}

\begin{proof}[Proof of Theorem~\ref{thm:tensorproducts}]
When $d\geq 2$, this follows form Theorem \ref{Ni=C} (ii).

When $d=1$, by considering the induced $G$-actions, we have the following inclusions of $G$-von Neumann algebra
 $$L^\infty(G/\Gamma)\overline{\otimes} \cN\subset L^\infty(G/\Gamma)\overline{\otimes}\cM_0\subset (L^\infty(G/\Gamma)\overline{\otimes} \cN)\overline{\otimes}L^\infty(G/P, \nu_P).$$
We can appeal to Theorem~\ref{thm:nciftfield} to conclude that 
 \[L^\infty(G/\Gamma)\overline{\otimes}\cM_0=L^\infty(G/\Gamma)\overline{\otimes} \cN\overline{\otimes}L^\infty(G/Q, \nu_Q), ~P<Q<G.\]
 By applying the ucp map $$m_\Gamma\otimes \id: L^\infty(G/\Gamma)\overline{\otimes}( \cN\overline{\otimes}L^\infty(G/P,\nu_P))\to \cN\overline{\otimes}L^\infty(G/P,\nu_P)$$ acting on both sides of the above equation, we get
$$\mathcal{M}_0=\cN\overline{\otimes}L^\infty(G/Q, \nu_Q).$$

\end{proof}

We now proceed to prove Theorem~\ref{thm:mainsplitting}. As is the case with the predecessors, the proof is divided into two cases. The first case is when the $\Gamma$-equivariant conditional expectation $\mathbb{E}_{\tau}\circ\mathbb{E}$ is $\Gamma$-invariant. In this situation, we show that $\mathcal{M}=\mathcal{N}\rtimes\Lambda$. The other case is when $\mathbb{E}_{\tau}\circ\mathbb{E}$ is not $\Gamma$-invariant. 
\begin{proof}[Proof of Theorem~\ref{thm:mainsplitting}] First, suppose that $\mathbb{E}_{\tau}\circ\mathbb{E}$ is $\Gamma$-invariant. It follows from Proposition~\ref{prop:insideN} that $\mathbb{E}(\cM)\subset\mathcal{N}$. Since $\mathcal{M}$ is an intermediate algebra, it follows that $\mathbb{E}(x\lambda(s)^*)\in\mathcal{N}$ for all $x\in\cM$ and for all $s\in\Lambda\setminus\{e\}$.
Therefore, it follows from \cite[Corollary~3.4]{suzuki} that 
\[
\cM = \cN\rtimes\Lambda.
\]
Suppose now that $\mathbb{E}_{\tau}\circ\mathbb{E}$ is not $\Gamma$-invariant. If $d=1$, then using Theorem~\ref{thm:mainembeddingprt}, we see that $L^{\infty}(G/Q,\nu_Q)\hookrightarrow\mathcal{M}$. If $d\ge 2$,  then, there exists $1\le i\le d$ such that $\mathcal{M}_i$, the collection of all \(G_i\)-continuous elements in $\mathcal{M}$ is non-trivial by \cite[Theorem 5.8]{bader2023charmenability}. Theorem \ref{thm:invarince undertensor} gives us that $L^{\infty}(G_i/P_i,\nu_{P_i})\hookrightarrow\mathcal{M}$. Either way, using Proposition~\ref{prop:essentiallyfree}, we see that $\mathbb{E}(\mathcal{M})\subset\mathcal{M}$. It now follows from \cite[Corollary~3.4]{suzuki} that $\mathcal{M}$ is a crossed product of the form $\mathcal{M}_0\rtimes\Lambda$, where $\cM_0$ is a $\Gamma$-von Neumann algebra with
\[
\mathcal{N} \subset \mathcal{M}_0 \subset L^\infty(G/P, \nu_P) \overline{\otimes} \mathcal{N}.
\] 
Now we can complete the proof with Theorem \ref{thm:tensorproducts} since each $G_i$ acts ergodically.
\end{proof}
We finish this section with a concrete example. 
\begin{example}
Let $\Gamma < G$ be as above, and consider the non-commutative Bernoulli shift. Let $(M_0, \tau_0)$ be a non-trivial tracial von Neumann algebra which is an amenable factor (say, for example, $M_2(\mathbb{C})$). Let
\[
\mathcal{N} = \bar{\otimes}_{g \in G} (M_0, \tau_0)_g
\]
be the infinite tensor product indexed by the group $G$, equipped with the translation action. Since $G$ is non-compact and the action is mixing, the restriction of this action to any non-compact factor $G_i$ is ergodic (indeed, mixing). In this setting, $\mathcal{N}$ is the hyperfinite II$_1$ factor. We can now apply Theorem~\ref{thm:mainsplitting} to conclude that all intermediate subalgebras between the crossed product $\mathcal{N} \rtimes \Gamma$ and the boundary extension $(L^\infty(G/P) \overline{\otimes} \mathcal{N}) \rtimes \Gamma$ is a crossed product of the form $(L^\infty(G/Q) \overline{\otimes} \mathcal{N}) \rtimes \Gamma$.
\end{example}

\section*{Acknowledgements}
This work was conducted while S.Z. was visiting the Institute of Mathematics of the Polish Academy of Sciences in Warsaw, partially supported by Ordinance No. 7/2023, which co-finances small scientific meetings from the statutory funds of IMPAN. Y.J. is partially supported by the National Natural Science Foundation of
China (Grant No. 12471118). We express our gratitude to Cyril Houdayer for taking the time to review an earlier draft and for several helpful suggestions. 
\bibliographystyle{amsalpha}
\bibliography{name}
\end{document}